 \DeclareMathOperator*{\esssup}{ess\,sup}
\DeclareMathOperator*{\argmax}{arg\,max}
\DeclareMathOperator*{\argmin}{arg\,min}
\newtheorem{theorem}{Theorem}
\newtheorem{proposition}{Proposition}
\newtheorem{lemma}{Lemma}
\newtheorem{ass}{Assumption}
\newtheorem{definition}{Definition}
\newtheorem{remark}{Remark}
\newcommand\EE {\mathbb E}
\newcommand\FF {\mathbb F}
\newcommand\NN {\mathbb N}
\newcommand\RR {\mathbb R}
\newcommand\PP {\mathbb P}
\newcommand\WW {\mathbb W}
\def\qed{\hskip6pt\vrule height6pt width5pt depth1pt}
\title{Sequential optimal contracting in continuous time}
\author{Guillermo Alonso Alvarez\thanks{Department of Mathematics, University of Michigan. \protect\url{guialv@umich.edu}} \and Erhan Bayraktar\thanks{Department of Mathematics, University of Michigan. \protect\url{erhan@umich.edu}}\thanks{Funded in part by the NSF through DMS-2106556 and by the Susan M. Smith Chair.} \and Ibrahim Ekren\thanks{Department of Mathematics, University of Michigan. \protect\url{iekren@umich.edu}}\thanks{Funded in part by the NSF through DMS-2406240.} \and  Liwei Huang\thanks{Department of Mathematics, University of Michigan. \protect\url{huanglw@umich.edu}}}
\date{\today}
\begin{document}
\maketitle
\begin{abstract}
In this paper we study a principal-agent problem in continuous time with multiple lump-sum payments (contracts) paid
at different deterministic times. We reduce the non-zero sum Stackelberg game between the principal and agent to a standard stochastic optimal control problem. We apply our result to a benchmark model for which we investigate how different inputs (payment frequencies, payments' distribution, discounting factors, agent's reservation utility) affect the principal's value and agent's optimal compensations.
\end{abstract}
\section{Introduction}

 Principal-agent problems describe the strategic interactions between two parts: the principal (i.e. the manager), and the agent (i.e. the employee). The principal aims to incentivize good performance from the agent by designing an optimal compensation, i.e. the contract. Such contracts are contingent on an agent's controlled random state, referred as the output process (i.e. the firm's value), and must satisfy the agent's participation constraint. In most of real-life applications, the contracts are structured as multiple lump-sum payments scheduled periodically (e.g. insurance, brokerage, managerial compensations). The contract schedule can be deterministic (e.g. salaries) or random (e.g. spot bonuses). This paper investigates the problem of optimal contract schedule design (multiple lump-sum payments) in continuous time. To our knowledge so far, this is the first paper addressing this problem as prior studies focused on either (i) continuous time lump-sum single payment and a possible continuous payment, (ii) discrete time multiple lump-sum payments. A continuous time approach facilitates the tractability and the qualitative analysis in the study of optimal contract schedule design. The later has numerous applications in economics, finance and management science...     
 
A substantial amount of research has been developed in the continuous time principal-agent literature. The first publication on this matter is authored by Holmström and Milgrom \cite{holmstrom1987aggregation}. The authors demonstrated that in a finite horizon setting the optimal contract is linear with respect to the output process.  Since then, multiple extensions have been developed. We highlight the following papers extending the original model: 
\cite{schattler1993first}, \cite{sung1995linearity},\cite{sung1997corporate}, \cite{muller1997first}, \cite{muller2000asymptotic}, \cite{hellwig2002discrete}, \cite{hellwig2007role}. 
 Later on, the following articles 
\cite{pham2009continuous}, \cite{williams2011persistent}, \cite{williams2015solvable},  \cite{cvitanic2006optimal}, \cite{cvitanic2008principal}, \cite{cvitanic2009optimal}  employed the stochastic maximum principle and forward–backward stochastic differential equations to characterize optimal compensations in a more general setting. Finally, we mention the latest contributions:   \cite{competitive}, \cite{manymanyagents}, \cite{hernandez2019contract}, \cite{hernandezpossmai}. 

In discrete time, the problem of optimal contracting schedule design has been extensively explored. Most papers, (e.g.  \cite{rubinstein1983repeated}, \cite{rogerson1985repeated}), focus on the infinite repeated contracting scenario, where neither the principal nor the agent discounts the future. Lambert \cite{lambert1983long} addressed the finite multi-period contracts, considering the agent's problem with discounting. Fudenberg \cite{fudenberg1990short}, \cite{fudenberg1990moral} further examined the distinctions between short-term and long-term contracts, identifying the conditions under which the complexity of optimal long-term contracts is no greater than that of short-term contracts. The primary techniques employed in the previous models are the first-order approach and variational techniques.

Our paper has the following contributions: Firstly, in section \ref{setup} and \ref{principal-agent} we consider a general sequential contracting problem with contract schedules consisting of multiple lump-sum payments paid at deterministic times. Following a similar approach than \cite{cvitanic2018dynamic}, we represent the agent's value using a recursive system of quadratic BSDEs (backward stochastic differential equations). The previous allows us to 
reduce the principal's bi-level optimization problem to a single stochastic control problem with mixed static and continuous controls which can be approached using dynamic programming techniques (see, for example, \cite{capponizhang}). The assumptions made are fairly general and can be applied to a variety of sequential contracting problems.  

Secondly, in section \ref{benchmark} we introduce a benchmark model to investigate how the model's exogenous inputs affect the following outputs: principal and agent's value, optimal contract schedules and optimal frequency and distribution of payments. Inspired by \cite{sannikov2008continuous}, our model assumes a risk neutral principal and a risk averse (via power utility), time sensitive agent.  In addition to the reservation utility we assume a limited liability constraint imposing that every admissible contract schedule provides to the agent a non-negative continuation utility. The principal´s problem is to find an optimal contract schedule satisfying the agent's reservation utility and the limited liability constraints.

Following the results from sections \ref{setup} and \ref{principal-agent} we reduce the principal's sequential contracting problem to a stochastic control problem with state constraints and mixed continuous-discrete controls. The previous can be approach by a careful application of the results from Bouchard and Nutz \cite{bouchard2012weak}. In their paper, Bouchard and Nutz derive a dynamic programming principle for the state constrained stochastic control problems and write a comparison principle for viscosity solutions of its associated HJB equation. In order to apply the aforementioned results, we find an appropriate upper and lower bound of the principal´s value function using probabilistic arguments. Finally, in Theorem \ref{hjb:pp:in} we show that the principal´s value function is continuous and solves a recursive system of HJB equations. See \cite{usersguideviscosity} for a comprehensive review of the theory of viscosity solutions of second order partial differential equations.  

In section \ref{numerics} we approximate numerically the solution of the recursive system of HJB equations characterizing the principal's value function. The previous allows us to investigate how the contracting environment affects the principal's value and the agent's optimal compensations. Firstly, we observe from our numerical results that the optimal intermediate payments $\xi_i^{*}$ delivered to the agent are an increasing function of the agent's continuation utility. Furthermore, as the agent's discounting factor increases, we conclude that the principal's maximum achievable profit decreases. This is connected to the concept of employment interval described by Sannikov \cite{sannikov2008continuous} indicating that the principal is less inclined to provide a large utility to an impatient agent. Moreover, independently from the agent's level of patience, the principal always benefits from increasing the frequency of payments. The previous is a consequence of the absence of transaction costs in our model. In what respects to the distribution of the payments, we conclude that the principal's choice of payment distribution highly depends on the agent's participation constraint. The previous effect is noticeable when the agent is impatient. In what concerns to the distribution of payments, we conclude that the principal's optimal payment distribution distribution highly depends on the agent's reservation utility level. The previous effect is noticeable when the agent is impatient.

Finally, we compare the principal´s value in our model (initial negotiation) with an analogous contracting model when the contracts are renegotiated at every transaction time. In our benchmark model the negotiation is finalized at the initial time implying that the agent precommits to a long-term contract guaranteeing to remain in the firm until the last payment $\xi_{N}$ is transacted. In contrast, the renegotiation problem represents the scenario where the principal and agent sign a new contract at beginning of each contracting period with a possibly different agent's reservation utility level. We show from our numerical results that the principal's optimal negotiation setting (initial negotiation, renegotiation) depends on the model's inputs.

\section{ Set up}\label{setup}
We start by describing the dynamics of the output process, i.e the value of the firm. We consider
$\Omega:= {C}([0,T],\RR)$  the space of $\RR$-valued, continuous functions endowed with the supremum norm $\|\cdot\|_{{C}([0,T],\RR)}$, and the Wiener measure $\WW$. We denote by $W$ the canonical random element in $\Omega$, and $\FF$ its augmented filtration (with respect to $\WW$). The
 principal compensates the agent with $N$ lump sum payments $\bar{\xi}_N:=(\xi_1,\ldots,\xi_N)$, i.e. the contract schedule, transacted at $\mathcal{T}_N:= \{T_1,\ldots,T_{N-1}\},$ where $0 < T_1< \ldots < T_{N-1} < T < \infty$. For each $i \in \left\{1,\ldots,N\right\}$, we define the set of $i$-th contracts as follows: 
 \begin{equation}\label{set.contracts}
    \mathcal{C}^0_i := \left\{ \xi : (\Omega,\mathcal{F}_{T_i}) \mapsto \left(E,\mathcal{B}(E)\right)  \text{ measurable }, \text{ } \EE^{\WW}\left[\exp\left(p\xi \right)\right]< \infty, \text{ for all } p\geq 0 \right\}, 
 \end{equation}
 where $E \subset \RR$ is a non empty convex set. 
 
 Additionally, for all $i \in \left\{1,\ldots,N\right\}$, we introduce   $\Sigma_i :=\prod_{j=1}^i \mathcal{C}_j^0 $ corresponding to the set of contract schedules $\bar{\xi}_i :=(\xi_1,\ldots,\xi_i)$ transacted at $\mathcal{T}_i:= \{ T_1,\ldots,T_i\} \subset \mathcal{T}_N$. Let $\bar{\xi}_{N-1} \in \Sigma_{N-1}$ be a contract schedule, the output process $X^{\bar{\xi}_{N-1}} $ is the unique strong solution of the following iterative system of SDEs (stochastic differential equations):
 \begin{equation}\label{state_base}
     X^{\bar{\xi}_{N-1}}_t = x_0+ \sum_{i=1}^{N-1} G_i\left(X^{\bar{\xi}_{N-1}}_{T_i^{-}},\xi_i\right)\mathbbm{1}_{t\geq T_i}+ \int_{0}^t \sigma(s,X^{\bar{\xi}_{N-1}}_s) dW_s, \quad \WW-a.s.
 \end{equation}
  
where $x_0 \in \RR$, and $\sigma : [0,T] \times \RR \mapsto \RR$,   $G_i:\RR \times E \mapsto \RR$, $1\leq i \leq N$, are measurable mappings satisfying the following assumption: 
 \begin{ass}\label{ass1}
 The mappings $(G_i)_{i=1}^N$, and $\sigma$ satisfy the following properties: 
 \begin{itemize}
             \item[(i)] There exists a constant $K_G>0$, for which
    \begin{equation*}
    |G_i(x,y)| \leq K_G \left(1+ |x|+|y| \right),
    \end{equation*}
  for all $(i,x,y) \in \{1,\ldots,N\} \times \RR \times E$. 
  \item[(ii)] There exists a constant $K_\sigma>0$, for which 
      $$|\sigma(t,x)|\leq K_\sigma\left(1+|x|\right), \quad  |\sigma(t,x)-\sigma(t,x')|\leq K_\sigma|x-x'|,$$
      for all $x, x' \in \RR$. 
    \item[(iii)]$\sigma$ is bounded, and $\sigma_t$ is invertible with bounded inverse for all $0 \leq t\leq T$.
  \end{itemize}
 \end{ass}
Note that, under Assumption \ref{ass1}, the SDE \eqref{state_base} has a unique strong solution. We write $X_{t^{-}} := \lim_{s\rightarrow t^{-}} X_s$ for all $0 \leq t \leq T$. Next, we describe the impact of the agent's actions on the output process. We assume that the agent's actions are $\FF$-adapted, $\RR$-valued process $\alpha$. Furthermore, we introduce the measurable process $\lambda: [0,T]\times \RR \times \RR \mapsto \RR$, satisfying the following condition: 
\begin{ass}
\label{ass2} 
$\lambda(t,x,\cdot)\in {C}^1(\RR)$ for all $(t,x) \in [0,T]\times \RR$.
Moreover, for all $(t,a,x) \in [0,T]\times \RR^2$, there exists $K_{\lambda} >0$, such that 
    \begin{align*}
        |\lambda(t,x,a)| \leq K_{\lambda}\left(1+|a|\right),  \quad
        \left|\frac{\partial }{\partial a}\lambda(t,x,a)\right|\leq K_{\lambda }.
    \end{align*}
\end{ass}
Given a contract schedule $\bar{\xi}_{N-1}\in \Sigma_{N-1}$, we introduce the set of admissible agent's actions $\mathcal{A}(\bar{\xi}_{N-1})$, consisting of the $\FF$-adapted processes $\alpha$ for which there exists $\epsilon >0$, such that 
\begin{equation*}
   \EE\left[ \mathcal{E}\left(\int_0^T \sigma_s(X_s^{\bar{\xi}_{N-1}})^{-1}\lambda_s(X^{\bar{\xi}_{N-1}}_s,\alpha_s)dW_s \right)^{1+\epsilon} \right] < \infty. 
\end{equation*}

Using Girsanov theorem, we obtain that for any $(\bar{\xi}_{N-1},\alpha) \in \Sigma_{N-1}\times \mathcal{A}(\bar{\xi}_{N-1})$, the output process satisfies 
\begin{align}
		X^{\bar{\xi}_{N-1}}_t&= G_i(X^{\bar{\xi}_{N-1}}_{T_i^{-}},\xi_{i})+\int_{T_i}^t \lambda_s(X^{\bar{\xi}_{N-1}}_{ s},\alpha_s)ds+\int_{T_i}^t \sigma_s(X^{\bar{\xi}_{N-1}}_s)dB^\alpha_s, \quad T_i \leq t < T_{i+1},\quad \PP^\alpha -a.s. \nonumber\\
        		 X^{\bar{\xi}_{N-1}}_t &= x_0+\int_0^t \lambda_s(X^{\bar{\xi}_{N-1}}_s,\alpha_s)ds+\int_0^t\sigma_s(X^{\bar{\xi}_{N-1}}_s)dB^\alpha_s, \quad 0 \leq t < T_{1}, \quad \PP^\alpha -a.s., \label{state0}\end{align}

where $i \in \{1,\ldots,N-1\}$, and
\begin{align*}
    \frac{d\PP^\alpha}{d\WW}:= \left(\int_0^T \sigma_s(X_s^{\bar{\xi}_{N-1}})^{-1}\lambda_s(X^{\bar{\xi}_{N-1}}_s,\alpha_s)dW_s \right),\quad 
    B_t^\alpha := W_t -\int_0^t\sigma_s(X_s^{\bar{\xi}_{N-1}})^{-1}\lambda_s(X^{\bar{\xi}_{N-1}}_s,\alpha_s)ds .  
\end{align*}

\begin{remark}
 It is worth commenting the economic interpretation of the state equation \eqref{state0}. The random variable $\Delta X^{\bar{\xi}_{N-1}}_{T_i}-\xi_{T_i}$ represents the execution cost that the firm experiences at each transaction time $T_i\in \mathcal{T}_N$. The action $\alpha \in \mathcal{A}$ indicates the level of effort that the agent makes affecting the distribution of the output process $X$. Finally, $\sigma(t,x)$ represents the risk level of the project when $X^{\bar{\xi}_{N-1}}_t = x$.
\end{remark}
    
\section{The principal-agent problem}\label{principal-agent}
In this section, we introduce a class of sequential optimal contracting problems that can be solved applying dynamic programming techniques. We start introducing the agent's objective. Firstly, we define the agent's utility function $U_a$, running cost $c$ and discounting process $k$ defined by the measurable functions: $U_a: E \mapsto \RR$, $c : [0,T]\times \RR^2\mapsto \RR$, and $k:[0,T]\times \RR^2 \mapsto \RR$ satisfying the following condition:
\begin{ass}\label{ass3}
The mappings $U_a$, $c$ and $k$ satisfy the following properties:
\begin{itemize}
\item[(i)] There exists a constant $K_u > 0$, for which
\begin{equation}\label{cond.utility}
    |U_a(y)| \leq K_u\left(1+ |y|\right), \quad \forall y \in E.
\end{equation}

\item[(ii)] For all $(t,x) \in [0,T]\times \RR$, 
 $c(t,x,\cdot) \in C^1(\RR)$, strictly convex and non negative. Moreover, there exist constants $K_c\geq 0$ and $p\geq 1$, for which
\begin{align*}
   \left|c(t,x,a)\right| \leq K_{c}\left(1+|a|^2+|x|\right), \quad \left| \frac{\partial}{\partial a}c(t,x,a) \right|\geq K_c|a|^{p}, \quad
   \lim_{|a|\rightarrow \infty}\frac{|c(t,x,a)|}{|a|} = \infty.
\end{align*} 
\item[(iii)] $k$ is bounded. Moreover, for all $(t,x) \in [0,T]\times \RR$, $k(t,x,\cdot)\in C^1(\RR)$, and  $\frac{\partial}{\partial a}k(t,x,\cdot)$ is bounded.
\end{itemize}
\end{ass}
Next, we introduce the controlled discount process defined for all $0 \leq t \leq s \leq T$
\begin{equation*}
    \mathcal{K}^\pi_{t,s} := \exp\left(-\int_t^s k_r(X_r,\alpha_r)dr \right).
\end{equation*}

Given a contract schedule $\bar{\xi}_N:=(\xi_1,\ldots,\xi_N) \in \Sigma_N$ with transaction times $\mathcal{T}_N := \{T_1,\ldots,T_N\}$, the agent maximizes the following objective: 
\begin{align}\label{ap:in}
    J_a \left(t,\alpha,\bar{\xi}_N\right) &
    :=\EE^{\PP^\alpha}\left[\mathcal{K}_{t,T}^\alpha U_a(\xi_N) +\sum_{i=1}^{N-1}\mathcal{K}^\alpha_{t,T_i}U_a\left(\xi_i\right)\mathbbm{1}_{t< T_i} -\int_{t}^{T}\mathcal{K}^\alpha_{t,s}c_s(X_s,\alpha_s)ds \bigg|\mathcal{F}_t\right],
\end{align}
where $(\alpha,X^{\bar{\xi}_{N-1}})$ solves \eqref{state0}.

Additionally, we define the agent's continuation value as follows:
\begin{equation*}
    V^a_t(\bar{\xi}_N) :=\esssup_{\alpha \in \mathcal{A}(\bar{\xi}_{N-1})}   J_a \left(t,\alpha ,\bar{\xi}_N\right).
\end{equation*}

Next, we introduce the principal's problem. Let $\mathcal{A}^*(\bar{\xi}_N)$ be the set of agent's optimal actions given the contract schedule $\bar{\xi}_N\in \Sigma_N$, the principal maximizes the following objective: 
\begin{align}\label{principalp}
    J_p\left(\bar{\xi}_N \right) &= \sup_{\alpha \in \mathcal{A}^*(\bar{\xi}_{N})}\EE^{\PP^{\alpha}}\left[U_p\left(l(X^{\bar{\xi}_{N-1}})-\xi_{N} \right)\right],
\end{align}
where $U_p$ is a real utility function, and $l : \Omega \mapsto \RR$ is a Borel measurable liquidation function.

Given an agent's reservation utility $R_a \in \RR$, we denote by $\Sigma^a_N$ the set of incentive compatible admissible contracts: 
\begin{equation*}
    \Sigma^a_N := \left\{\bar{\xi}_N\in {\Sigma}_N : V_a(\bar{\xi}_N)\geq R_a \right\}.
\end{equation*}
Hence, we define the principal's value as follows
\begin{equation*}
     V_p = \sup_{\bar{\xi}_N\in \Sigma^a_N}  J_p\left(\bar{\xi}_N \right).
\end{equation*}
In conclusion, the principal's problem is to find an incentive compatible contract schedule $\bar{\xi}_N \in \Sigma_N^a$, for which $J_p(\bar{\xi}_N) = V_p$. In order to represent the principal's problem effectively, we introduce the following class of processes.

For all $0 \leq t < s \leq T$, we define the following sets of processes:
  \begin{align*}
      \mathbb{H}([t,s]) &:=\left\{ Z,  \FF-\text{predictable } \bigg|\text{ } \EE^{\WW}\left[\left(\int_t^s \left|Z_r\right|^2dr \right)^{p/2}\right] < \infty, \forall p \geq 0\right\}, \\
      \mathbb{D}_{\text{exp}}([t,s]) &:= \left\{ Y, \FF-\text{predictable, càdlag } \bigg| \EE^{\WW}\left[\exp\left(p\sup_{t\leq r\leq s} \big|Y_r\big| \right)\right] < \infty, \forall p\geq 0\right\}.
  \end{align*}
 In Proposition \ref{lemma.SDE} we represent any admissible contract schedule in terms of a solution of a recursive system of BSDEs. We start by introducing the following functionals: 
\begin{align}  
h_t(a,x,y,z) &:= z\lambda_t(x,a) -k_t(x,a)y-c_t(x,a), \quad (t,a,x,y,z) \in [0,T]\times \RR^4, \nonumber \\
H_t(x,y,z) &:= \sup_{a\in \RR}h_t(a,x,y,z). \label{hamiltonian.1}
\end{align}
\begin{remark}
    Note that under assumptions $1$, $2$, the mapping $a \mapsto -h_t(x,y,z,a)$ is coercive implying that $H_t(x,y,z)$ is well defined for all $(t,x,y) \in [0,T]\times \RR^2$. Moreover, $z\mapsto H_t(x,y,z)$ is convex as it is the poinwise supremum of affine functions.
    \end{remark}

\begin{proposition}\label{lemma.SDE} Let $\bar{\xi}_{N}:=(\xi_1,\ldots,\xi_N) \in \Sigma_{N}$ be a contract schedule. Then, there exists a pair of processes  
    $(Z,Y)\in \mathbb{H}([0,T])\times \mathbb{D}_{\exp}([0,T])$, solving the following recursive system of BSDEs:
    \begin{align}
    &Y_t=Y_0-\int_{0}^{t}H_s\left(X_s^{\bar{\xi}_{N-1}},Y_s,Z_s\right)ds+\int_{0}^{t}Z_s\sigma_s(X_s^{\bar{\xi}_N})dW_s, \quad \WW -a.s.\quad 0\leq t < T_1\label{long.paymments} \\
    &Y_t=Y_{T_{i}^{-}}-U_a\left(\xi_{i} \right)-\int_{T_{i}}^{t}H_s\left(X_s^{\bar{\xi}_{N-1}},Y_s,Z_s\right)ds+\int_{T_{i}}^{t}Z_s\sigma_s(X^{\bar{\xi}_{N-1}}_s)dW_s, \quad \WW-a.s.\nonumber\quad T_i \leq t < T_{i+1},
    \end{align}
    for all $i \in \{1,\ldots,N-1\}$.
\end{proposition}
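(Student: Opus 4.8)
The plan is to construct $(Y,Z)$ by solving the system \emph{backward}, one interval at a time, invoking the existence theory for quadratic BSDEs with unbounded terminal data (in the spirit of \cite{cvitanic2018dynamic}, ultimately resting on Briand--Hu-type estimates). Although \eqref{long.paymments} is written forward from $Y_0$, the datum that pins the solution down is the terminal payment, i.e. the implicit condition $Y_{T^-}=U_a(\xi_N)$; the unknown is the pair $(Y,Z)$ (and in particular $Y_0$) rendering the dynamics and this terminal condition consistent. Accordingly I would first treat the terminal interval $[T_{N-1},T)$ with terminal value $Y_{T^-}=U_a(\xi_N)$, solve the BSDE with generator $H$ there, then descend interval by interval, at each transaction time $T_i$ transferring the value across the jump via $Y_{T_i^-}=Y_{T_i}+U_a(\xi_i)$ and feeding this $\mathcal{F}_{T_i}$-measurable random variable as the terminal condition for $[T_{i-1},T_i)$.

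The core single-interval step requires checking that $H$ satisfies the structural hypotheses of quadratic BSDE theory. By the Remark, $a\mapsto -h_t(a,x,y,z)$ is coercive and $z\mapsto H_t(x,y,z)$ is convex, so $H$ is well defined, finite, and (by standard envelope arguments) continuous in $(y,z)$. Since $k$ is bounded (Assumption \ref{ass3}(iii)), the dependence $-k_t(x,a)y$ makes $H$ globally Lipschitz in $y$. For the growth in $z$: evaluating at $a=0$ yields the affine lower bound $H_t(x,y,z)\ge z\lambda_t(x,0)-k_t(x,0)y-c_t(x,0)$; for the upper bound I would use that $\lambda$ grows at most linearly in $a$ (Assumption \ref{ass2}) while, in the worst case $p=1$, the lower bound $|\partial_a c|\ge K_c|a|^p$ together with convexity and non-negativity of $c$ forces at least quadratic growth of $c$ in $a$, so that $\sup_a[\,|z|K_\lambda|a|-c_t(x,a)\,]\lesssim |z|^2$. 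Hence $|H_t(x,y,z)|\le \eta_t+\beta|y|+\tfrac{\gamma}{2}|z|^2$ for a process $\eta$ with at most linear growth in $X_t$. Because $\sigma$ is bounded with bounded inverse (Assumption \ref{ass1}(iii)), replacing $Z$ by $Z\sigma$ preserves this quadratic growth.

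It then remains to supply the integrability needed to apply the unbounded quadratic BSDE existence result in the classes $\mathbb{H}$ and $\mathbb{D}_{\exp}$. The terminal data $U_a(\xi_i)$ has exponential moments of every order, by Assumption \ref{ass3}(i) and the integrability built into the definition of $\mathcal{C}_i^0$ in \eqref{set.contracts}; the state process $X^{\bar{\xi}_{N-1}}$, being on each interval a bounded-volatility diffusion with jumps whose sizes $G_i$ grow at most linearly, has all polynomial moments, so $\eta$ has exponential moments of all orders. The existence theorem then produces $(Y,Z)$ on $[T_{N-1},T)$ with $Y\in\mathbb{D}_{\exp}$ and $Z\in\mathbb{H}$. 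Running the recursion, at each step $Y_{T_i}\in\mathbb{D}_{\exp}$ and $U_a(\xi_i)$ both have exponential moments of all orders, so $Y_{T_i^-}=Y_{T_i}+U_a(\xi_i)$ is again an admissible terminal condition and the class is preserved. Concatenating the interval solutions gives a càdlag process $Y$ with downward jumps $-U_a(\xi_i)$ at the $T_i$, lying in $\mathbb{D}_{\exp}([0,T])$, with $Z\in\mathbb{H}([0,T])$, which solves the full system.

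The main obstacle is precisely the quadratic growth in $z$ coupled with an unbounded terminal condition: classical Lipschitz BSDE theory does not apply, and one must rely on the a priori exponential-moment estimates for quadratic BSDEs, where the convexity of $H$ in $z$ recorded in the Remark is the decisive structural feature (it also yields uniqueness, should one wish to identify $Y$ with $V^a$). The remaining work is bookkeeping: verifying that exponential integrability of \emph{every} order is propagated intact through the $N$ backward steps and across the jumps, so that the glued solution genuinely lies in $\mathbb{D}_{\exp}([0,T])\times\mathbb{H}([0,T])$ rather than in a larger space with only finitely many moments.
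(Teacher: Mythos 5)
Your proposal follows essentially the same route as the paper's proof: verify that $H$ is convex and of quadratic growth in $z$ and Lipschitz in $y$, then construct the solution backward interval by interval via the Briand--Hu existence theory for quadratic BSDEs with convex generators and unbounded terminal conditions, propagating exponential moments of every order across each jump $Y_{T_i^-}=Y_{T_i}+U_a(\xi_i)$ (the paper does this via Cauchy--Schwarz) and gluing the pieces. One cosmetic correction: the exponential integrability of the $X$-dependent part of the generator follows from the boundedness of $\sigma$ (sub-Gaussian stochastic integral) together with the exponential moments of the $\xi_i$ entering the jumps $G_i$, not from ``all polynomial moments'' of $X$ as you wrote, but this does not alter the argument.
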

Using the previous recursive system of BSDEs we characterize the agent's optimal response for any contract schedule $\bar{\xi}_N \in \Sigma_N$ satisfying an integrability constraint. 
\begin{proposition}  \label{BSDE.value.agent}
Let $\bar{\xi}_N \in \Sigma_N $, and  $(Y,Z) \in   \mathbb{D}_{\text{exp}}([0,T])\times \mathbb{H}([0,T]) $ be the unique pair of processes defined in Proposition \ref{lemma.SDE}. Assume $\EE\left[\mathcal{E}\left(\int_0^TZ_sdW_s\right)^{1+\epsilon}\right] < \infty$, for some $\epsilon >0$. Then, 
\begin{align*}
    Y_t = V_t^a(\bar{\xi}_N) \hspace{2mm} \WW-a.s., \quad 0 \leq t \leq T.
\end{align*}

Moreover,  $\alpha \in \mathcal{A}^*(\bar{\xi}_N)$ if and only if     
\begin{align}
    \alpha_t &= \hat{\alpha}_t(X_t^{\bar{\xi}_{N-1}},Y_t,Z_t), \quad dt\otimes \WW -a.e., \nonumber\\
    \hat{\alpha}_t(x,y,z) &\in \argmax_{} h_t(\cdot,x,y,z), \quad \forall (t,x,y,z) \in [0,T]\times \RR^3. 
    \label{alphasup}
\end{align}
\end{proposition}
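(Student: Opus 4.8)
The plan is to run a verification (martingale optimality) argument under the measure $\PP^\alpha$, with the process $s\mapsto\mathcal{K}^\alpha_{t,s}Y_s$ playing the role of the agent's discounted gains-to-come. Fix $t\in[0,T]$ and an arbitrary admissible $\alpha\in\mathcal{A}(\bar{\xi}_{N-1})$. On each interval $[T_i,T_{i+1})$ the pair $(Y,Z)$ solves the BSDE of Proposition \ref{lemma.SDE}, so I would substitute the Girsanov relation $dW_s=\sigma_s(X_s)^{-1}\lambda_s(X_s,\alpha_s)\,ds+dB^\alpha_s$, recall that $d\mathcal{K}^\alpha_{t,s}=-k_s(X_s,\alpha_s)\mathcal{K}^\alpha_{t,s}\,ds$, and apply Itô's formula to obtain
\[
d\big(\mathcal{K}^\alpha_{t,s}Y_s\big)=\mathcal{K}^\alpha_{t,s}\Big(h_s(\alpha_s,X_s,Y_s,Z_s)-H_s(X_s,Y_s,Z_s)+c_s(X_s,\alpha_s)\Big)ds+\mathcal{K}^\alpha_{t,s}Z_s\sigma_s(X_s)\,dB^\alpha_s,
\]
using that $Z_s\lambda_s-k_sY_s=h_s(\alpha_s,\cdot)+c_s$. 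At each $T_i$ the downward jump $Y_{T_i}=Y_{T_i^-}-U_a(\xi_i)$ contributes $-\mathcal{K}^\alpha_{t,T_i}U_a(\xi_i)$, and the terminal condition is $Y_T=U_a(\xi_N)$. Integrating from $t$ to $T$, collecting the jumps, and solving for $Y_t$ expresses it as the payoff block $\mathcal{K}^\alpha_{t,T}U_a(\xi_N)+\sum_{i}\mathbbm{1}_{t<T_i}\mathcal{K}^\alpha_{t,T_i}U_a(\xi_i)-\int_t^T\mathcal{K}^\alpha_{t,s}c_s\,ds$ plus the nonnegative term $\int_t^T\mathcal{K}^\alpha_{t,s}(H_s-h_s(\alpha_s,\cdot))\,ds$ and the stochastic integral $-\int_t^T\mathcal{K}^\alpha_{t,s}Z_s\sigma_s\,dB^\alpha_s$, matching the structure of \eqref{ap:in}.

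The crux is showing that this last stochastic integral is a true $\PP^\alpha$-martingale, so that it disappears upon taking $\EE^{\PP^\alpha}[\,\cdot\mid\mathcal{F}_t]$, yielding
\[
Y_t=J_a(t,\alpha,\bar{\xi}_N)+\EE^{\PP^\alpha}\!\left[\int_t^T\mathcal{K}^\alpha_{t,s}\big(H_s(X_s,Y_s,Z_s)-h_s(\alpha_s,X_s,Y_s,Z_s)\big)ds\,\Big|\,\mathcal{F}_t\right].
\]
Since $k$ and $\sigma$ are bounded (Assumptions \ref{ass3}(iii), \ref{ass1}(iii)), $\mathcal{K}^\alpha$ is bounded and it suffices to bound $\EE^{\PP^\alpha}\big[(\int_t^T|Z_s|^2ds)^{1/2}\big]$. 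I would rewrite this expectation under $\WW$ against the density $\tfrac{d\PP^\alpha}{d\WW}=\mathcal{E}(\int_0^T\sigma^{-1}\lambda\,dW)$ and apply Hölder's inequality, combining the $(1+\epsilon)$-integrability of the Girsanov density with the fact that $Z\in\mathbb{H}([0,T])$ has finite moments of every order; this is precisely where the standing hypothesis $\EE[\mathcal{E}(\int_0^TZ_sdW_s)^{1+\epsilon}]<\infty$ enters. Granting the martingale property, the second term above is nonnegative because $H_s\ge h_s(\alpha_s,\cdot)$ by definition of the Hamiltonian, so $Y_t\ge J_a(t,\alpha,\bar{\xi}_N)$ for every admissible $\alpha$, and taking the essential supremum gives $Y_t\ge V^a_t(\bar{\xi}_N)$.

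For the reverse inequality and the optimality characterization I would plug in the feedback control $\alpha^*_s:=\hat{\alpha}_s(X_s,Y_s,Z_s)$ of \eqref{alphasup}, which exists and admits a measurable selection since $a\mapsto h_s(a,\cdot)$ is coercive with a unique maximizer by the strict convexity of $c$ (Assumption \ref{ass3}(ii)). One must verify $\alpha^*\in\mathcal{A}(\bar{\xi}_{N-1})$, which follows from the growth bounds of Assumptions \ref{ass1}, \ref{ass2}, \ref{ass3} together with the integrability hypothesis controlling the Doléans exponential of the drift induced by $\alpha^*$. For this control the gap term vanishes identically, $h_s(\alpha^*_s,\cdot)=H_s$, so $Y_t=J_a(t,\alpha^*,\bar{\xi}_N)$ and hence $Y_t=V^a_t(\bar{\xi}_N)$ for each $t$; the càdlàg regularity of $Y$ upgrades this to the a.s.\ statement for all $t$. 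The same identity shows that an admissible $\alpha$ is optimal if and only if $\EE^{\PP^\alpha}[\int_t^T\mathcal{K}^\alpha_{t,s}(H_s-h_s(\alpha_s,\cdot))ds]=0$; as the integrand is nonnegative this is equivalent to $h_s(\alpha_s,X_s,Y_s,Z_s)=H_s(X_s,Y_s,Z_s)$ for $ds\otimes\PP^\alpha$-a.e.\ $(s,\omega)$, i.e.\ $\alpha_s\in\argmax_a h_s(a,X_s,Y_s,Z_s)$, and since $\PP^\alpha\sim\WW$ this is exactly the claimed $dt\otimes\WW$-a.e.\ condition. I expect the genuine obstacles to be the true-martingale/uniform-integrability step and the admissibility of $\alpha^*$; the jump bookkeeping at the $T_i$ is routine once the terminal and jump data are aligned with the payoff \eqref{ap:in}.
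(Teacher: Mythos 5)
Your proposal follows essentially the same route as the paper's proof: an It\^o/Girsanov verification argument yielding the key identity $J_a(t,\alpha,\bar{\xi}_N) = Y_t - \EE^{\PP^\alpha}\big[\int_t^T \mathcal{K}^\alpha_{t,s}\big(H_s - h_s(\alpha_s,\cdot)\big)ds \,\big|\, \mathcal{F}_t\big]$, followed by nonnegativity of the gap, equality exactly for Hamiltonian maximizers, and measurable selection plus the growth bound $|\hat{\alpha}|\leq C(1+|y|+|z|)$ together with the integrability hypothesis on $Z$ to show $\alpha^*$ is admissible. If anything, you are more explicit than the paper about the uniform-integrability/true-martingale step (which the paper subsumes under ``Applying Ito Lemma''); your only slips are minor: the $(1+\epsilon)$-moment of the Girsanov density for a generic admissible $\alpha$ comes from the definition of $\mathcal{A}(\bar{\xi}_{N-1})$ rather than from the hypothesis on $Z$, and uniqueness of the maximizer of $a\mapsto h_t(a,\cdot)$ is not guaranteed (nor needed, since measurable selection handles non-unique argmax).
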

\begin{remark}
The previous proposition motivates us to restrict the set of admissible contract schedules excluding contracts that are not integrable enough. The later does not suppose a significant limitation and allows us to fully characterize the set of agent's optimal responses.    
\end{remark}
We introduce the following class of contract schedules: 
\begin{align*}
    \hat{\Sigma}^a_N &:= \left\{\bar{\xi}_N \in \Sigma^a_N : (Y,Z)   \text{ solving }  \eqref{BSDE.value.agent}, \hspace{2mm}\EE\left[\mathcal{E}\left(\int_0^TZ_sdW_s\right)^{1+\epsilon}\right] < \infty, \hspace{2mm} \text{ for some } \epsilon >0  \right\}. 
\end{align*}
In addition, for any $\bar{\xi}_N \in \hat{\Sigma}^a_N $ we denote by $\hat{\mathcal{A}}(\xi_N)$ the set of $\FF$-adapted processes $\alpha$ satisfying
\begin{align}
     \alpha_t &= \hat{\alpha}_t(X_t^{\bar{\xi}_{N-1}},Y_t,Z_t), \quad dt\otimes \WW -a.e., \nonumber\\
    \hat{\alpha}_t(x,y,z) &\in \argmax_{} h_t(\cdot,x,y,z), \quad \forall (t,x,y,z) \in [0,T]\times \RR^3. 
\end{align}
Finally, we reduce the principal's problem to a weak formulation stochastic control problem. 
\begin{theorem}\label{theoremstochcontrol}The principal's value satisfies
\begin{align*}
    \sup_{\bar{\xi}_N \in \hat{\Sigma}_N^a}J_p(\bar{\xi}_N) =\sup_{Y_0 \geq R_a}\sup_{(Z,\bar{\xi}_{N-1})\in {\mathbb{H}}([0,T])\times \Sigma_{N-1}}\sup_{\alpha \in \hat{\mathcal{A}}(\bar{\xi}_{N-1})} \EE^{\PP^{\alpha}} \left[U_p\left(l\left(X^{\bar{\xi}_{N-1}}\right)-U_a^{-1}\left(Y^{\bar{\xi}_{N-1},Z}_T\right)\right)\right],
\end{align*}
where $(X^{\bar{\xi}_{N-1}},Y^{\bar{\xi}_{N-1},Z})$ solves the following iterative system of SDEs:
\begin{align*}
    &Y_t = Y_{T_i^{-}}-U_a(\xi_i)-\int_{T_i}^tH_s(X_s,Y_s,Z_s)ds+\int_{T_i}^tZ_s dX^{\bar{\xi}_{N-1}}_s, \quad T_i \leq t < T_{i+1},\\
    &Y_t = Y_0-\int_{0}^tH_s(X_s^{\bar{\xi}_{N-1}},Y_s,Z_s)ds+\int_{0}^tZ_sdX^{\bar{\xi}_{N-1}}_s, \quad 0\leq t < T_1,\\
    &X^{\bar{\xi}_{N-1}}_t = G_i\left(X^{\bar{\xi}_{N-1}}_{T_i^{-}},\xi_i\right)+\int_{T_i}^t\left(\lambda_s\left(X_s^{\bar{\xi}_{N-1}},\hat{\alpha}_s(X^{\bar{\xi}_{N-1}}_s,Z_s,Y_s)\right)ds+\sigma_s(X^{\bar{\xi}_{N-1}}_s)dB^\alpha_s \right) ,\quad T_i\leq t < T_{i+1},  \\
    &X^{\bar{\xi}_{N-1}}_t = x_0 +\int_{0}^t \left(\lambda_s\left(X^{\bar{\xi}_{N-1}},\hat{\alpha}_s(X^{\bar{\xi}_{N-1}}_s,Z_s,Y_s)\right)ds+\sigma_s(X^{\bar{\xi}_{N-1}}_s)dB^\alpha_s\right), \quad 0 \leq t < T_1, \quad \PP^{\alpha} -a.s.
\end{align*}
 
\end{theorem}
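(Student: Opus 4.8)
The plan is to prove the identity by reparametrizing the principal's problem: rather than optimizing over the full schedule $\bar{\xi}_N=(\bar{\xi}_{N-1},\xi_N)$, one optimizes over the triple $(Y_0,Z,\bar{\xi}_{N-1})$ formed by the initial continuation value, the control $Z$, and the intermediate payments, and recovers the terminal payment through $\xi_N=U_a^{-1}(Y_T)$. Propositions \ref{lemma.SDE} and \ref{BSDE.value.agent} already supply the dictionary between a contract and its continuation value, so the core of the argument is to exhibit an objective-preserving bijection between the two feasible sets and then take suprema. Throughout I use that the utility $U_a$ is strictly increasing, so that $U_a^{-1}$ is well defined on its range.

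First I would treat the inequality ``$\le$''. Fix $\bar{\xi}_N\in\hat{\Sigma}_N^a$ and let $(Y,Z)$ be the solution of the recursive BSDE of Proposition \ref{lemma.SDE}. By Proposition \ref{BSDE.value.agent}, $Y_t=V_t^a(\bar{\xi}_N)$; evaluating at $t=0$ gives incentive compatibility $Y_0=V_a(\bar{\xi}_N)\ge R_a$, while evaluating at $t=T$, where no running cost or intermediate payment survives, gives $Y_T=U_a(\xi_N)$, i.e. $\xi_N=U_a^{-1}(Y_T)$. The same proposition identifies $\mathcal{A}^*(\bar{\xi}_N)$ with the feedback maximizers $\hat{\mathcal{A}}(\bar{\xi}_{N-1})$ of \eqref{alphasup}. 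Substituting $\xi_N=U_a^{-1}(Y_T)$ into \eqref{principalp} gives
\[
J_p(\bar{\xi}_N)=\sup_{\alpha\in\hat{\mathcal{A}}(\bar{\xi}_{N-1})}\EE^{\PP^\alpha}\!\left[U_p\!\left(l(X^{\bar{\xi}_{N-1}})-U_a^{-1}(Y_T^{\bar{\xi}_{N-1},Z})\right)\right],
\]
which is exactly the inner expression of the right-hand side evaluated at $(Y_0,Z,\bar{\xi}_{N-1})$. Taking the supremum over $\hat{\Sigma}_N^a$ yields ``$\le$''.

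For ``$\ge$'' I would reconstruct a contract from an admissible triple. Under $\WW$ the state equation \eqref{state_base} is decoupled from $Y$, so I would first solve it for $X^{\bar{\xi}_{N-1}}$ and then, given $Z$, solve the forward version of the $Y$-equation; since $y\mapsto H_t(x,y,z)$ is Lipschitz (the $y$-dependence enters only through the bounded discount $k$), this forward equation is well posed, and passing to $\PP^\alpha$ via Girsanov recovers precisely the system stated in the theorem. Setting $\xi_N:=U_a^{-1}(Y_T)$ produces a candidate $\bar{\xi}_N=(\bar{\xi}_{N-1},\xi_N)$ for which, by construction, $(Y,Z)$ solves the BSDE of Proposition \ref{lemma.SDE} with terminal data $Y_T=U_a(\xi_N)$; by uniqueness it is exactly the pair attached to $\bar{\xi}_N$, so the value $\EE^{\PP^\alpha}[\,\cdot\,]$ for the chosen $\alpha\in\hat{\mathcal{A}}(\bar{\xi}_{N-1})$ equals $J_p(\bar{\xi}_N)$, giving ``$\ge$''.

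The hard part will not be this algebra but verifying that the reconstructed schedule is genuinely admissible, namely that $\bar{\xi}_N\in\hat{\Sigma}_N^a$. This requires two checks: that $\xi_N=U_a^{-1}(Y_T)\in\mathcal{C}^0_N$, which I would obtain by combining $Y\in\mathbb{D}_{\exp}([0,T])$ (exponential moments of all orders) with the growth of $U_a^{-1}$ to bound $\EE^{\WW}[\exp(p\xi_N)]$ for every $p\ge0$; and that the two integrability restrictions agree — the Novikov-type condition $\EE[\mathcal{E}(\int_0^T Z_s\,dW_s)^{1+\epsilon}]<\infty$ defining $\hat{\Sigma}_N^a$ on one side, and the admissibility of $\alpha$ underlying $\PP^\alpha$ on the other — so that no $Z\in\mathbb{H}([0,T])$ outside the admissible class spuriously enlarges the right-hand side. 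Once the feasible sets are shown to coincide, equality of the two suprema follows at once from the objective-preserving correspondence.
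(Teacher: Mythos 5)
You should know at the outset that the paper never writes a standalone proof of Theorem \ref{theoremstochcontrol}: it is presented as an immediate consequence of Propositions \ref{lemma.SDE} and \ref{BSDE.value.agent}, following the reduction scheme of \cite{cvitanic2018dynamic}. Your reparametrization $(Y_0,Z,\bar{\xi}_{N-1})\leftrightarrow\bar{\xi}_N$ with $\xi_N=U_a^{-1}(Y_T)$ is exactly that intended argument, and your ``$\le$'' direction is sound: Proposition \ref{lemma.SDE} attaches $(Y,Z)$ to the contract, Proposition \ref{BSDE.value.agent} identifies $Y_0=V_0^a(\bar{\xi}_N)\ge R_a$, $Y_T=U_a(\xi_N)$, and $\mathcal{A}^*(\bar{\xi}_N)=\hat{\mathcal{A}}(\bar{\xi}_{N-1})$, so every admissible contract maps to a feasible triple with the same principal's payoff.

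The gap sits in your ``$\ge$'' direction, precisely at what you call ``the hard part,'' and the tools you propose there would not close it. (i) For an arbitrary $Z\in\mathbb{H}([0,T])$, the forward-constructed $Y$ need \emph{not} lie in $\mathbb{D}_{\exp}([0,T])$: membership in $\mathbb{H}$ only yields polynomial moments of $\int_0^T|Z_s|^2ds$, whereas in Proposition \ref{lemma.SDE} the exponential moments of $Y$ come from the backward construction with terminal datum in $\mathcal{C}^0_N$, not from properties of $Z$. (ii) Assumption \ref{ass3}(i) bounds the growth of $U_a$, hence gives only a \emph{lower} bound on the growth of $U_a^{-1}$; already in the benchmark, $U_a^{-1}(y)=y^\gamma$ with $\gamma>1$ outruns exponential moments, so $\EE^{\WW}\left[\exp\left(p\,U_a^{-1}(Y_T)\right)\right]<\infty$ cannot be deduced from $Y\in\mathbb{D}_{\exp}$ even if (i) held, and for, say, logarithmic $U_a$ it fails badly. (iii) Nothing guarantees $Y_T$ lands in the range $U_a(E)$, so $\xi_N$ may be undefined. (iv) Most importantly, without the Novikov-type condition $\EE\left[\mathcal{E}\left(\int_0^TZ_sdW_s\right)^{1+\epsilon}\right]<\infty$ --- which a generic element of $\mathbb{H}([0,T])$ does not satisfy --- Proposition \ref{BSDE.value.agent} cannot be invoked at all: $Y_0$ is then not identified with the agent's value (so the participation constraint of the reconstructed contract is unverified), and the feedback processes in $\hat{\mathcal{A}}(\bar{\xi}_{N-1})$ are not known to be optimal responses. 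The upshot is that the two feasible sets genuinely do not coincide under the stated hypotheses; the equality must be read with the right-hand supremum implicitly restricted to triples whose reconstructed contract lies in $\hat{\Sigma}^a_N$ (equivalently, to $Z$ carrying the same integrability), which is in effect what the paper itself does in Section \ref{benchmark} by imposing $|Z_t|\le K$. With that restriction your bijection argument closes; without it, the right-hand side could a priori strictly dominate the left, and no argument of the kind you sketch can rule this out.
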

\section{The benchmark model
}\label{benchmark}
In this section we use the results obtained in the previous sections and apply it to a benchmark model with a risk neutral principal and risk averse, time sensitive agent. We assume that the principal compensates the agent with $N$ payments $\bar{\xi}_N := (\xi_{1},\ldots,\xi_{N})$ transacted at $\mathcal{T}_N:=\{T_1,\ldots,T_{N}\}$, where $(T_i)_{i=0}^N$ is a strictly increasing sequence satisfying $T_0 = 0, T_N= T$. Given a contract schedule $\bar{\xi}_N \in \Sigma_N$, and an initial output level $x_0 \in \RR$, we introduce the output process:
\begin{align*}
    X^{\bar{\xi}_{N-1}}_t := x_0 + W_t -\sum_{j=1}^{N-1}\xi_j \mathbbm{1}_{T_j \leq t}, \quad \WW -a.s.
\end{align*}
We denote by $\mathcal{A}$ the set of admissible agent's actions. The previous corresponds to the set of $\FF$ adapted processes $\alpha$ satisfying
\begin{align*}
    &\EE\left[\exp \left(-\frac{1}{2}\int_0^T\alpha^2_sds+\int_0^T\alpha_sdW_s \right)^{1+\epsilon}\right]< \infty,
\end{align*}
for some positive constant $\epsilon >0$.

 Hence, given an agent's action $\alpha \in \mathcal{A}$, and a contract schedule $\bar{\xi}_N \in \Sigma_N$, the output process satisfies the following controlled dynamics:
\begin{align*}
&X_t = x_0-\sum_{j=1}^{N-1} \xi_{j}\mathbbm{1}_{T_j\leq t} +\int_0^t\alpha_sds +  B^\alpha_t,  \quad\PP^\alpha-a.s., \quad 0 \leq t \leq T, 
\end{align*}
where
\begin{align*}
    B^\alpha_t := W_t - \int_0^t \alpha_r dr, \quad
    \frac{d\PP^\alpha}{d\WW}:= \exp \left(-\frac{1}{2}\int_0^T\alpha^2_sds+\int_0^T\alpha_sdW_s \right).
\end{align*}

In this scenario, we introduce the agent's objective and continuation value as follows
\begin{align*}
    J_a(t,\alpha,\bar{\xi}_N) &:=\EE^{\PP^\alpha}\left[\sum_{i=1}^N e^{-k_a(T_i-t)}U_a(\xi_{i})\mathbbm{1}_{T_i>t}-\frac{1}{2}\int_0^T e^{-{k_a}(s-t)}\alpha^2_sds \bigg| \mathcal{F}_t\right],\\
    V_t^a(\bar{\xi}_N) &:=\esssup_{\alpha \in \mathcal{A}} \EE^{\PP^\alpha}\left[\sum_{i=1}^N e^{-k_a(T_i-t)}U_a(\xi_{i})\mathbbm{1}_{T_i>t}-\frac{1}{2}\int_0^T e^{-{k_a}(s-t)}\alpha^2_sds \bigg| \mathcal{F}_t\right]
\end{align*}
where, $U_a(y) := y^{1/\gamma}\mathbbm{1}_{y\geq 0}-\infty \mathbbm{1}_{y<0}$, for some $\gamma > 1$, and $k_a\geq0$ represents the agent's discounting factor.

On the other hand, we introduce the principal's objective and continuation value as follows
\begin{align}
    J_p\left(\bar{\xi}_N\right) := \EE^{\PP^\alpha}\left[X_T-\xi_N \right],
    \quad V_p := \sup_{\bar{\xi}_N \in \Sigma^a_N}\sup_{\alpha\in \mathcal{A}}J_p\left(\bar{\xi}_N\right),\label{principal.problem1}
\end{align}
where $\Sigma^a_N := \left\{ \bar{\xi}_N \in \Sigma_N \big| V_a(\bar{\xi}_N)\geq R_a, \hspace{1 mm} V_t^a(\bar{\xi}_N) \geq 0, \forall t \in [0,T]\right\}$.

In our formulation $R_a \geq 0$ represents the agent's reservation utility level. Additionally, we assume that the agent has limited liability implying that the agent only accepts contracts that generate a non-negative continuation utility at any time. 
 
 Applying Proposition \ref{BSDE.value.agent}, we obtain that for any contract schedule $\bar{\xi}_N \in \Sigma^a_N$ there exists a unique pair of processes $(Y,Z) \in \mathbb{H}([0,T])\times \mathbb{D}_{\text{exp}}([0,T])$ satisfying

    \begin{align*}
    Y_t &= Y_{T_{i}^{-}}-U_a(\xi_i) +\int_{T_i}^t\left(  \frac{1}{2}Z_s^2 + k_a Y_s\right)ds+\int_{T_i}^tZ_s dW_s,\quad T_i \leq t < T_{i+1},\quad 1\leq i< N-1, \\
    Y_t &= Y_{0}+\int_{0}^t \left( \frac{1}{2}Z_s^2 + k_a Y_s\right) ds+\int_{0}^tZ_s dW_s,\quad 0 \leq t < T_1,
    \end{align*}
    where $Y_0 \in \RR$.

Next, we invoke (\cite{ElKarouiTan2024},Theorem $4.3$) to justify that the previous weak formulation stochastic control problem is equivalent to a stochastic control problem in the strong formulation where the stochastic basis is fixed.  
Using Theorem \ref{theoremstochcontrol} and the previous observation, we reduce the principal's problem \eqref{principal.problem1}  to the following strong formulation stochastic control problem: 
\begin{equation}\label{principal.value.benchmark}
    V(t,x,y) : = \sup_{Y_0 \geq R_a}\sup_{\bar{\xi}_N \in \Sigma_N(t,T)}\sup_{(Z,\bar{\xi}_{N-1}) \in \mathcal{U}(t,T)}\EE\left[X^{t,x,\bar{\xi}_{N-1}}_T-\left(Y^{t,y,\bar{\xi}_{N-1}}_T\right)^{\gamma}  \right],
\end{equation}
where, for $s>t$:
\begin{align*}
     &X^{t,x,\bar{\xi}_{N-1},Z}_s = x-\sum_{j=1}^{N-1} \xi_{j}\mathbbm{1}_{t<T_j\leq s} +\int_t^sZ_rdr +  B_t-B_s, \\
     &Y^{t,x,\bar{\xi}_{N-1},Z}_s = y-\sum_{j=1}^{N-1} U_a\left(\xi_{j}\right)\mathbbm{1}_{t<T_j\leq s} +\int_t^s\left(\frac{1}{2}Z^2_r+k_aY_r\right)dr + \int_t^sZ_rdB_r, 
\end{align*}
where $B$ is a brownian defined on fixed probability space $(\Omega,\FF,\PP)$, and $\FF$ is the augmented filtration generated by $B$. The control $Z$ is an $\FF$-predictable process and $\bar{\xi}_{N-1}:= (\xi_1,\ldots,\xi_{N-1})$ where, for every $i \in \{1,\ldots,N-1\}$, $\xi_i$ is a $\mathcal{F}_{T_i}$-measurable random variable satisfying the integrability stated in \eqref{set.contracts}. Moreover, the set $\mathcal{U}$ denotes the set of controls $(Z,\bar{\xi}_{N-1})\in \mathbb{H}([0,T])\times \Sigma_{N-1}$ satisfying the limited liability condition: $Y_{t}^{0,y_0,Z,\bar{\xi}_{N-1}}\geq 0$ $\PP-a.s.$, for all $(y_0,t) \in [R_a,\infty)\times [0,T]$, and the uniform boundedness condition: $|Z_t|\leq K, dt\otimes \WW -a.e.$
\begin{remark}
    From a modeling perspective, the restriction to uniformly bounded continuous controls $|Z_t| \leq K dt\otimes \WW -a.e.$, establishes a maximum sensitivity level of the agent's value with respect to changes in the output process. Jointly with the limited liability constraint and the reservation utility $R_a$ comprise the agent's participation constraint. 
\end{remark}
The following theorem allows us to write the principal's value function as the unique viscosity solution of a recursive system of HJB equations.

\begin{theorem}\label{hjb:pp:in}
    The principal's value function satisfies the following properties:
    \begin{enumerate}
             \item  The value function satisfies $V(t,x,y)= x+v(t,y)$, where $v$ is the unique viscosity solution of the following constrained HJB equation:
        \begin{align}\label{constrainedHJB}
             &v_t+G\left(t, y, v, v_y,  v_{yy}\right)=0, \quad (t,y) \in [T_{i-1}, T_{i})    \times (0,\infty),\\
             &v(T^{-}_{i}, y)=f_{i}(y), \quad y \geq 0, \quad i \in \{1, \cdots ,N-1\} \nonumber, \\
            &v(T_{N}, y)=-y^\gamma, \quad y \in [0,\infty], \nonumber \\
            &v(t, 0)=0, \quad  t \in [T_{i-1},T_{i}), \nonumber
        \end{align}
        where 
        $G(t,y,v_y,v_{yy}) := k_a yv_y + \sup_{|z|\leq K}\left\{z+\frac{1}{2}\left(v_y+v_{yy} \right)z^2\right\}$, and $f_i(y):= \max_{0 \leq \eta \leq y} -\eta^\gamma+v(T_i,y-\eta)$. Moreover, $v$ is continuous in each region $\mathcal{R}_i:=[T_{i-1},T_{i})\times [0,\infty)$, $ i \in \left\{1 \ldots  N\right\}$.
        \item The function $f_i$ is continuous and has polynomial growth for all $i \in \{1,\ldots,N-1\}$. Moreover, there exits a minimal function $\eta^*_i:[0,\infty) \mapsto \RR$ satisfying 
        \begin{itemize}
            \item $\eta_i^*(y) \in \argmax_{0\leq \eta \leq y} v(T_i,y-\eta)-\eta^\gamma$, for all $y \geq 0$, $i \in \{1,\ldots,N-1\}$. 
            \item $\eta^*_i$ is lower semicontinuous, for all $y\geq 0$, $i \in \{1,\ldots,N-1 \}$.
        \end{itemize}
        \item  $V(T_i^{-},x,y) = x+f_i(y)$, for all $(t,x,y) \in [0,T]\times \RR \times [0,\infty)$, $i \in \{1,\ldots N-1\}$.
            \end{enumerate}
\end{theorem}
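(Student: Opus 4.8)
The plan is to solve the problem by backward induction on the index $i$, treating each region $\mathcal{R}_i = [T_{i-1},T_i)\times[0,\infty)$ as a state-constrained stochastic control problem (control $Z$ with $|Z|\le K$, state $Y$ subject to the hard constraint $Y_s\ge 0$) to which the dynamic programming principle and comparison theorem of Bouchard and Nutz \cite{bouchard2012weak} apply, stitching the intervals together through the jump condition at each $T_i$. First I would record the decomposition $V(t,x,y)=x+v(t,y)$: the process $X$ enters the objective only through $X_T$, and $X_T=x+(\text{terms not involving }x)$, while neither the $Y$-dynamics nor the admissibility constraints (limited liability, $|Z|\le K$) depend on $x$; hence a shift of $x$ shifts the objective by a constant and $v(t,y):=V(t,0,y)$ is well defined. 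Next I would establish by elementary probabilistic arguments the two-sided bound
\[
-\,y^\gamma e^{\gamma k_a (T-t)} \;\le\; v(t,y) \;\le\; K(T-t),
\]
the upper estimate following from $\EE[\int_t^T Z_r\,dr]\le K(T-t)$ together with $\xi_j\ge 0$ and $Y_T\ge 0$, and the lower estimate from the explicit admissible choice $Z\equiv 0$, $\xi_1=\cdots=\xi_{N-1}=0$, which gives $Y_T=y\,e^{k_a(T-t)}$ and objective $x-Y_T^\gamma$. These are the bounds alluded to in the introduction; they yield local boundedness and polynomial growth, placing $v$ in the uniqueness class for comparison.

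On the interior of each $\mathcal{R}_i$ the running reward is the drift $Z$ of $X$ and the generator of $Y$ is $(\tfrac12 z^2+k_a y)\partial_y+\tfrac12 z^2\partial_{yy}$, so applying the DPP of \cite{bouchard2012weak} and the standard argument that the value is a viscosity solution of its Bellman equation produces $v_t+G(t,y,v_y,v_{yy})=0$, with $G$ exactly the stated Hamiltonian $k_a y v_y+\sup_{|z|\le K}\{z+\tfrac12(v_y+v_{yy})z^2\}$. The boundary condition $v(t,0)=0$ I would obtain from a direct analysis at the constraint: starting from $Y_t=0$, keeping $Y_s\ge 0$ almost surely forces $Z\equiv 0$, since otherwise the martingale part $\int Z\,dB$ would push $Y$ below zero with positive probability against a drift that is only $O(dt)$; then $Y\equiv 0$, every payment vanishes, and the objective collapses to $\EE[X_T]=x$, i.e. $v(t,0)=0$.

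The jump condition comes from the DPP across the transaction time: writing $\eta=U_a(\xi_i)$, the constraint $0\le\eta\le y$ encoding limited liability and the range of $U_a$, one gets $v(T_i^-,y)=\sup_{0\le\eta\le y}\{-\eta^\gamma+v(T_i,y-\eta)\}=f_i(y)$. Granting continuity of $v(T_i,\cdot)$ from the inductive hypothesis on $\mathcal{R}_{i+1}$, Berge's maximum theorem applied to the continuous objective over the continuous, compact-valued correspondence $y\mapsto[0,y]$ gives continuity of $f_i$ and upper hemicontinuity of the argmax; polynomial growth of $f_i$ follows from the bounds of the previous step (take $\eta=0$ for the lower estimate and $-\eta^\gamma\le 0$ for the upper). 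The minimal selector $\eta_i^*(y):=\min\argmax_{0\le\eta\le y}\{v(T_i,y-\eta)-\eta^\gamma\}$ is well defined because the argmax set is compact, and it is lower semicontinuous as the pointwise minimum of an upper hemicontinuous compact-valued correspondence.

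Finally, the comparison principle of \cite{bouchard2012weak}, applied within the polynomial-growth class on each $\mathcal{R}_i$ with terminal datum $f_i$ (and $-y^\gamma$ on the last interval), yields uniqueness of the constrained viscosity solution and, by sandwiching between the sub- and supersolutions furnished by the bounds, continuity of $v$ on each region; this closes the induction, since continuity of $v$ on $\mathcal{R}_i$ up to $t=T_{i-1}^-$ supplies a continuous datum $f_{i-1}$ for the next step. Part (3) is then immediate from $V(t,x,y)=x+v(t,y)$ together with $v(T_i^-,y)=f_i(y)$. I expect the main obstacle to be the interaction of the hard state constraint $Y\ge 0$ with the degeneracy of the diffusion—the volatility $Z$ can vanish and the drift $\tfrac12 Z^2+k_a y$ is small near $y=0$—so that verifying the hypotheses of \cite{bouchard2012weak} guaranteeing both the DPP and comparison up to the boundary in this degenerate, recursive setting, and in particular rigorously justifying the Dirichlet condition $v(t,0)=0$ and continuity up to $y=0$, is where the real work lies.
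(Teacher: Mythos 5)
Your overall architecture (backward induction over the regions $\mathcal{R}_i$, the Bouchard--Nutz machinery for the state-constrained problem on each region, Berge's maximum theorem at the jumps) coincides with the paper's, and your treatment of part (2) is essentially the paper's argument. The genuine gap is at the step you yourself flag as ``where the real work lies,'' and it is not a technicality that the rest of your argument survives: your two-sided bound $-y^\gamma e^{\gamma k_a(T-t)} \le v(t,y) \le K(T-t)$ is too crude to invoke \cite{bouchard2012weak} at all. Proposition 4.11 and the comparison result there require that the lower semicontinuous envelope $\hat{V}_*$ of the open-constraint value be continuous on the spatial boundary $\{y=0\}$, i.e. that $\hat{V}(t',x',y')\to x$ as $(t',x',y')\to(t,x,0)$ with $y'>0$; uniqueness is only asserted in the class of functions with polynomial growth \emph{and} this boundary continuity. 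Your upper bound $x+K(T-t)$ does not tend to $x$ as $y\downarrow 0$, so it gives no control on that limit, and your hard-constraint argument (``$Y_t=0$ forces $Z\equiv 0$'') evaluates the value only \emph{at} $y=0$, which is not the relevant object for a state-constrained problem: what must be excluded is a downward or upward jump of the value as $y\downarrow 0$. This is exactly why the paper constructs, in Lemma \ref{lemma_supersol}, the explicit smooth supersolution $\phi^{\gamma,a,\delta_0}(t,y) = -ay^\gamma + be^{(T-t)/T}y^{1/M} + e^{c(T-t)}(1-e^{-y})$, which vanishes at $y=0$: It\^o's formula plus the supersolution inequality gives $\hat{V}(t,x,y)\le x+\phi^{\gamma,\delta_0}(t,y)$, and the squeeze against the lower bound yields precisely the boundary continuity \eqref{eq.cont.value}. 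The supersolution is moreover built to be compatible with the jump operator: the coefficients $a_i=(N-i+1)^{-(\gamma-1)}$ are chosen so that $\sup_{0\le\eta\le y}\bigl\{-(N-i)^{-(\gamma-1)}(y-\eta)^\gamma-\eta^\gamma\bigr\} = -(N-i+1)^{-(\gamma-1)}y^\gamma$, which is what allows the upper bound to be propagated backwards through each $f_i$ in the induction while still vanishing at $y=0$. A bound like $K(T-t)$ trivially survives the jumps but remains useless at the boundary, so without a substitute for Lemma \ref{lemma_supersol} your induction cannot even get started on $\mathcal{R}_N$.

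A second, smaller gap: part (3) is not ``immediate.'' The quantity $V(T_i^-,x,y)$ is a limit as $t\uparrow T_i$, and identifying it with $x+f_i(y)$ is itself the content of the dynamic programming principle across the transaction time that you appeal to when deriving the jump condition; invoking that DPP to define $f_i$ and then declaring part (3) a consequence is circular. The paper proves it by first establishing the DPP up to time $T_{i}^- $ (via \cite{bouchard2012weak}, Lemma 4.9, and a concatenation argument), and then passing to the limit along arbitrary sequences $(t_n,x_n,y_n)\to(T_i,x,y)$, $t_n<T_i$, $y_n>0$, using $\tfrac1n$-optimal controls, standard SDE stability estimates, and the continuity of $f_i$, comparing the upper and lower semicontinuous envelopes $V^*(T_i^-,\cdot)$ and $V_*(T_i^-,\cdot)$. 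That limiting argument is where the existence of the left limit is actually established, and it is absent from your proposal.
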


\section{Numerical results}\label{numerics}
We employ numerical methods to discuss the model presented in the previous section. In the analysis, we fix $U_a(y) = y^{1/2}$ and the recursive HJB equation \eqref{constrainedHJB} is approximated numerically following a finite differences scheme. We refer to \cite{Convergenceviscositysougonathis} to justify the convergence of our numerical scheme to the principal's value function. 

\subsection{The case with no discounting  $\kappa_a=0$}

\begin{figure}[H]
     \centering
     \begin{subfigure}[b]{0.47\textwidth}
         \centering
         \includegraphics[width=\textwidth]{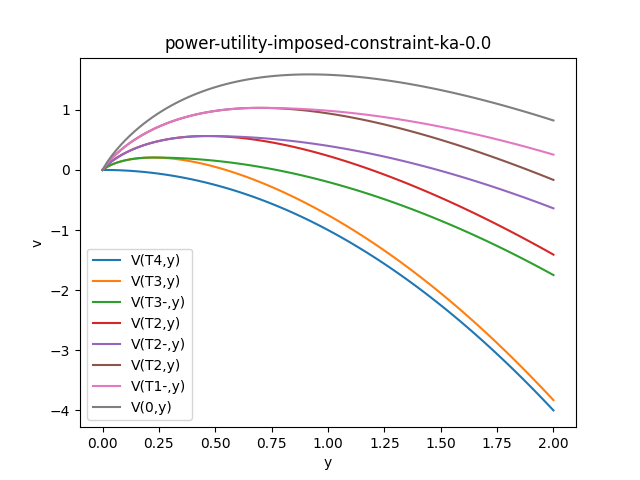}
         \caption{Principal's value with $k_a =0$.}
         \label{fig:ka=0,vy}
     \end{subfigure}
     \quad
     \begin{subfigure}[b]{0.47\textwidth}
         \centering
         \includegraphics[width=\textwidth]{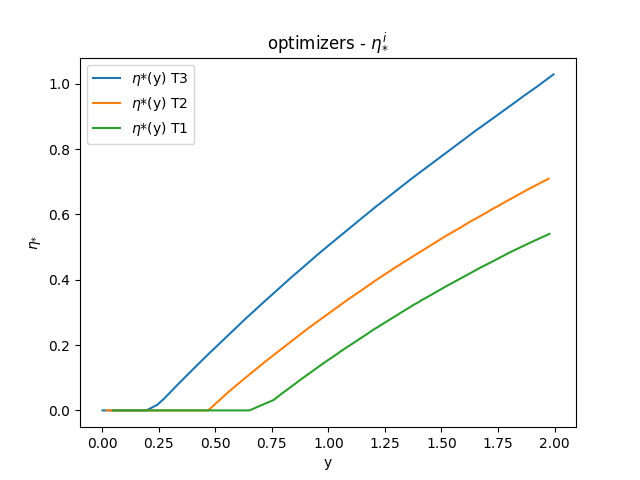}
         \caption{Static optimizers with $k_a=0$.}
         \label{fig:ka=0,eta}
     \end{subfigure}
        \caption{Initial Negotiation $k_a = 0$}
        \label{fig:pv,soka=0}
\end{figure}
Figure \eqref{fig:ka=0,vy} illustrates the principal's value function at each transaction time when $k_a =0$. The functions  \(v(T_{i},\cdot)\) represent the principal's value function at time \(T_{i}\), and \(v(T^{-}_{i},y)\) is the principal's value function immediately before \(T_{i}\). On the other hand, figure \eqref{fig:ka=0,eta} depicts the utilities that the principal provides to the agent at each intermediate transaction time. The function \(\eta_i^{*}(\cdot):=U_a(\xi_i^*(\cdot)) \) denotes the optimal utility the principal aims to pay to the agent at $T_i$, $i \in \{1,\ldots, N-1\}$.
We see that the principal's value function in figure \eqref{fig:ka=0,vy} exhibits concavity and it is ultimately decreasing with respect to the agent's continuation utility in a fixed time. The later is consistent with Sannikov's model (Lemma 8.1 \cite{possamai2024there}). Moreover, our numerical results indicate that an agent with small reservation utility has informational rent, meaning that the principal optimally offers a contract providing to the agent a utility strictly greater than the agent's reservation utility.

Secondly, figure \eqref{fig:ka=0,eta} shows that the optimal intermediate payments are an increasing function of the agent's utility at a given time. Moreover, for the same agent's utility level, the principal offers more utility to the agent in a posterior payment. Finally, \eqref{fig:ka=0,eta} reflects the fact that the region where the agent has informational rent shrinks as time elapses and the principal only compensates the agent when his utility does not belong to the informational rent region.

\subsection{The effect of the discounting factor $k_a>0$}
\begin{figure}[H]
     \centering
     \begin{subfigure}[b]{0.4\textwidth}
         \centering
         \includegraphics[width=\textwidth]{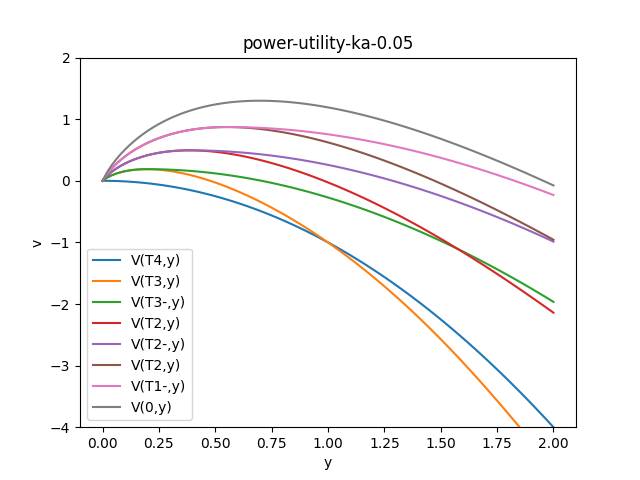}
         \caption{Principal's value with ka = 0.05 }
         \label{fig:ka=0.05,vy}
     \end{subfigure}
     \begin{subfigure}[b]{0.4\textwidth}
         \centering
         \includegraphics[width=\textwidth]{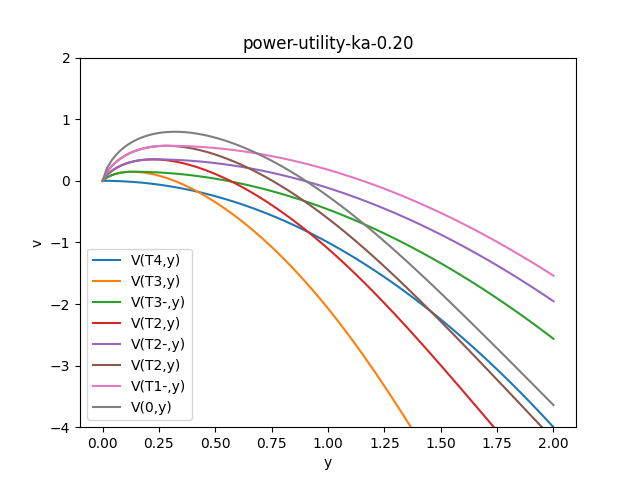}
         \caption{Principal's value with ka = 0.05}
         \label{fig:ka=0.2,vy}
     \end{subfigure}
        \caption{Discounting Comparison (Principal's Value Function)}
        \label{discouning_initial_negotiation}
\end{figure}

The figure above represents the principal's value function at each transaction time when the agent is inpatient: $k_a>0$. 
The above simulations align with the concept of employment interval discussed in \cite{sannikov2008continuous}. In our case, it directly shows that it is harmful for the principal to provide a large utility to the agent when the agent's discounting factor is greater than zero.
We define the $i$-{th} employment interval as the set $\mathcal{E}_i:=\{y \in [0,\infty): V(T_{i-1},y) \geq V(T_{i},y)\}$. A careful observation of the above figures shows that the employment interval shrinks when the time elapses. 
Moreover, a comparison between figures \eqref{fig:ka=0.05,vy} and \eqref{fig:ka=0.2,vy} shows that an increase of the discounting factor causes an shrinkage of the employment intervals in every contracting period.

\begin{figure}[H]
     \centering
     \begin{subfigure}[b]{0.47\textwidth}
         \centering
         \includegraphics[width=\textwidth]{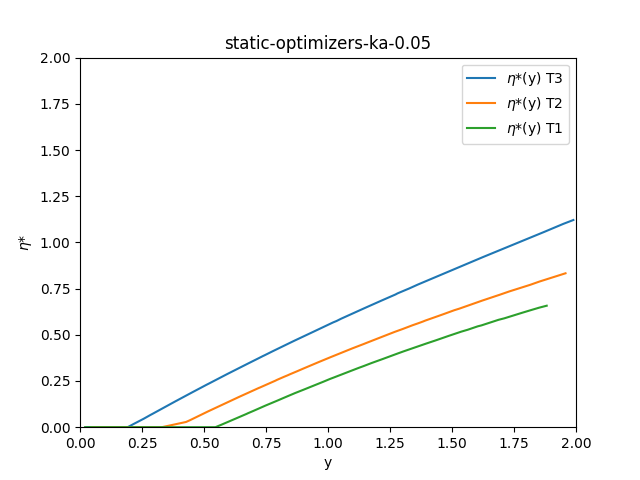}
         \caption{Optimal utilities delivered to the agent. $k_a = 0.05$}
         \label{fig:so_ka=0.05,eta}
     \end{subfigure}
     \quad
     \begin{subfigure}[b]{0.47\textwidth}
         \centering
         \includegraphics[width=\textwidth]{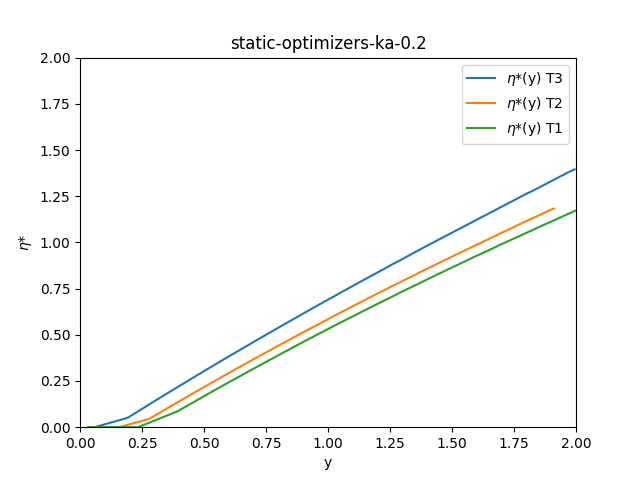}
         \caption{Optimal utilities delivered to the agent. $k_a = 0.2$}
         \label{fig:so_ka=0.2,eta}
     \end{subfigure}
        \caption{ Optimal utilities $\eta_i^*=U_a(\xi_i^*)$.}
        \label{fig:sokas}
\end{figure}
Figure \eqref{fig:sokas} above describes the optimal utilities delivered to the agent associated to the principal's value functions represented in figure \eqref{discouning_initial_negotiation}. We see that, with an increase in the discounting factor, the truncation region \(\{y : \eta^{*}_{i}(y) = 0\}\) shrinks. This means the principal prefers to provide more utilities to an agent with a high discounting factor. Finally, a careful comparison among figures  \eqref{fig:so_ka=0.05,eta} and \eqref{fig:so_ka=0.2,eta} shows that it is not optimal for the principal to fully compensate the agent before the terminal time. 

\subsection{Analysis of the payments' distribution}

\begin{figure}[H]
     \centering
     \begin{subfigure}[b]{0.47\textwidth}
         \centering
         \includegraphics[width=\textwidth]{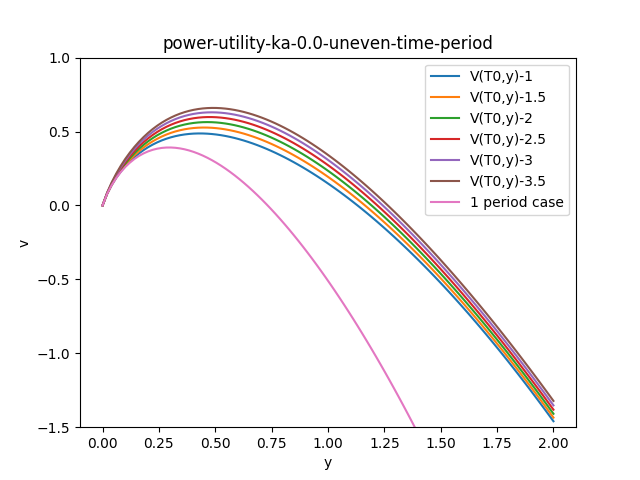}
         \caption{Principal's value $V(T_{0},y)$, $k_a = 0$}
         \label{fig:pvunevenska=0.0v0y}
     \end{subfigure}
     \quad
     \begin{subfigure}[b]{0.47\textwidth}
         \centering
         \includegraphics[width=\textwidth]{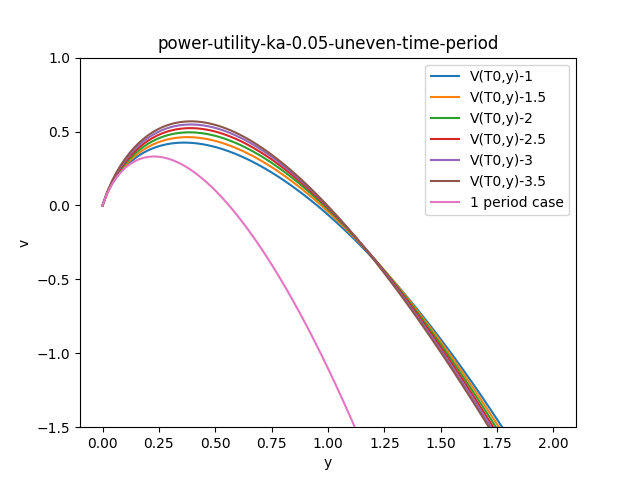}
         \caption{Principal's value $V(T_{0},y)$, $k_a = 0.05$}
         \label{fig:pvunevenska=0.05v0y}
     \end{subfigure}
        \caption{Uneven contracting periods.}
        \label{fig:distribution_contracting}
\end{figure}

Figure \eqref{fig:distribution_contracting} illustrates the problem with two payments with a terminal contract transacted at a fixed time $T=4$, and a flexible initial payment transacted at one of the following times: $T_1 = 1+\frac{i}{2}$, $i\in \{0,1,2,3,4,5\}$. For example, '$V(T_{0}, y) - 1.5$' represents the principal's value function at $T_0 = 0$ when the first payment occurs at $T_{1} = 1.5$ and the second payment occurs at $T_{2} = 4$. Additionally, the '1 period case' refers to the classic principal-agent model with a single payment transacted at the end of the contracting period $T=4$.

Firstly, we observe that the principal benefits from making multiple payments  compared to a single terminal compensation. Secondly, when $k_a =0$, the principal always benefits from delaying the first payment. Finally, subfigure \eqref{fig:pvunevenska=0.05v0y} depicts the principal's value functions when the agent is impatient ($k_a = 0.05$). It shows that if the agent is impatient with a relatively high reservation utility, the principal benefits from setting the first payment earlier. Conversely, if the agent is impatient with a low reservation utility, the principal benefits from delaying the first payment.
\subsection{Analysis of the payment frequency}
\begin{figure}[H]
    \centering
    \includegraphics[width=0.45\linewidth]{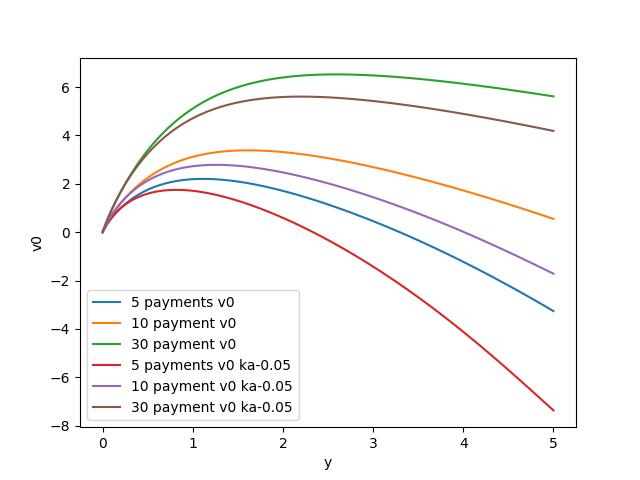}
    \caption{Payment Frequency Comparison}
    \label{fig:frequency}
\end{figure}
Figure \eqref{fig:frequency} illustrates the principal´s value $V(0,\cdot)$ for different payment frequencies. For $N=5,10, 30$, we consider a sequential optimal contracting problem with $N$ contracts transacted at \(T_{i} = i\frac{T}{N}\), $ i \in \{1,\ldots,N\}$, and $T =10$. We observe that as the number of payments in the contract schedule increases, the principal achieves higher profits for any given agent's reservation utility. The previous is related to the absence of transaction costs in our model. 
\subsection{Initial Negotiation VS Renegotiation}
In this subsection we compare the principal's value in our benchmark model (initial negotiation) with an analogous model assuming that each contract is renegotiated right after the previous payment is transacted. In the renegotiation setting the agent solves a different optimization problem in each contracting region. Moreover, the agent's continuation utility resets when the $i$-th payment is delivered. Mathematically, the renegotiation problem corresponds to a sequential Stackelberg game with $N$ different reservation utility constraints:
\begin{align}\label{reservation.utility.agent.reneg}
    \sup_{\alpha \in \mathcal{A}}\EE\left[e^{-k_aT_i}U_a(\xi_i) -\int_{T_{i-1}}^{T_i}e^{-k_as}\alpha^2_sds \right]\geq e^{-k_aT_{i-1}}R_a^i, \quad i \in \{1, \ldots, N\},
\end{align}
where $R_a^i$ is the reservation utility level in the $i$-th contracting period. 

On the other hand, recall the definition of the reservation constraint in the initial negotiation problem: 
\begin{align*}
    \sup_{\alpha \in \mathcal{A}}\EE\left[\sum_{i=1}^Ne^{-k_aT_i}U_a(\xi_i) -\int_{0}^{T}e^{-k_as}\alpha^2_sds  \right]\geq R_a,
\end{align*}
where $R_a$ is the agent's reservation utility in the initial negotiation problem. 

Assuming that the agent allocates his utility uniformly over the duration of the contract, we obtain normalizing the units in \eqref{reservation.utility.agent.reneg}: 
\begin{equation}\label{reservation.reneg}
R_a^i =e^{k_aT_{i-1}}\frac{(T_i-T_{i-1})}{T}R_a, \quad  i \in \{1,\ldots,N\}.
\end{equation}
The following figures represent the principal's value function in both settings assuming $N=4$,$T=8$,  $T_i =2i$, $i \in \{0,\ldots,4\}$.

\begin{figure}[H]
     \centering
     \begin{subfigure}[b]{0.47\textwidth}
         \centering
         \includegraphics[width=\textwidth]{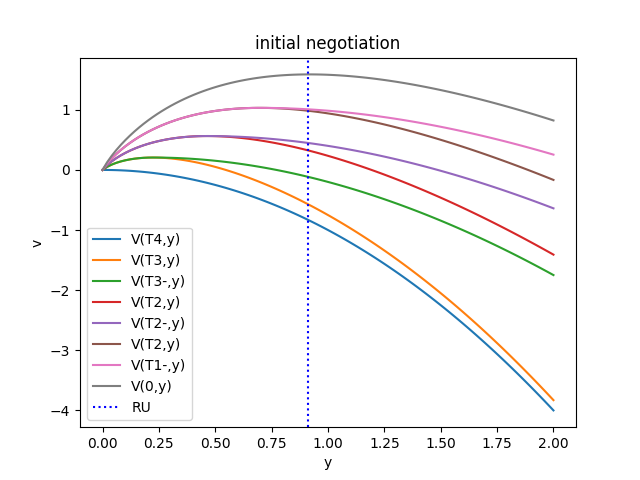}
         \caption{Principal's value. Initial negotiation. $k_a = 0$.}
         \label{fig:ka=0,vyru}
     \end{subfigure}
     \quad
     \begin{subfigure}[b]{0.47\textwidth}
         \centering
         \includegraphics[width=\textwidth]{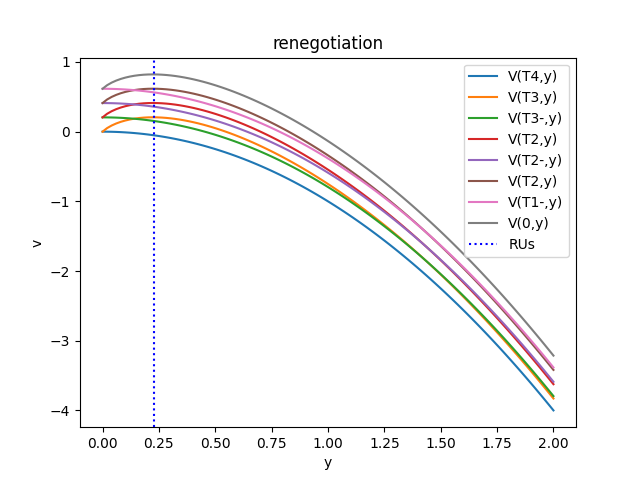}
         \caption{Principal's value. Renegotiation. $k_a = 0$.}
         \label{fig:renegotiation}
     \end{subfigure}
        \caption{Initial Negotiation VS Renegotiation $k_a = 0$}
        \label{fig:IvsReka0.0}
\end{figure}
Figure \eqref{fig:IvsReka0.0} describes the principal's value in the initial negotiation and renegotiation settings. In both cases we fix  $k_a = 0$, and $R_a =0.909$. The utilities of each contracting period in the renegotiation problem are computed using \eqref{reservation.reneg}. We observe that the principal benefits from an initial negotiation compared to a renegotiation setting.

\begin{figure}[H]
     \centering
     \begin{subfigure}[b]{0.44\textwidth}
         \centering
         \includegraphics[width=\textwidth]{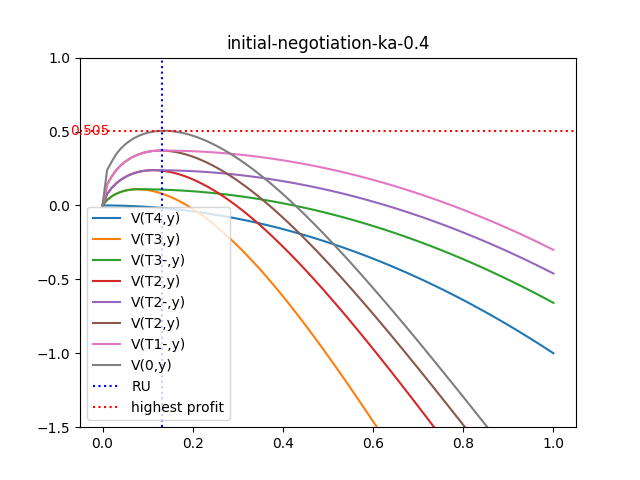}
         \caption{Principal's value function. Initial negotiation, $k_a =0.4$. }
         \label{fig:ka=0.4,vyruinbest}
     \end{subfigure}
     \quad
     \begin{subfigure}[b]{0.44\textwidth}
         \centering
         \includegraphics[width=\textwidth]{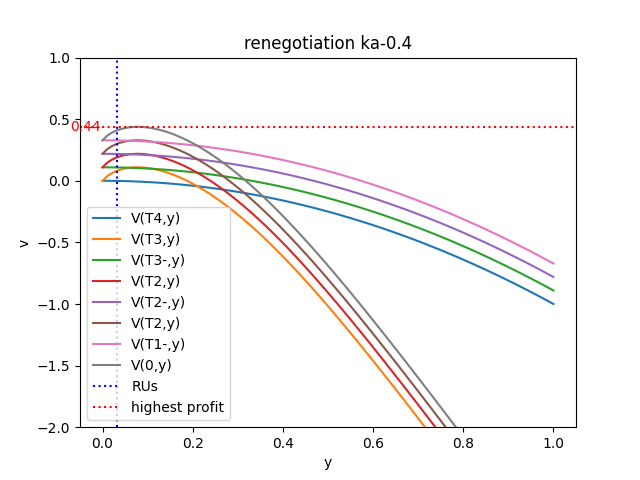}
         \caption{Principal's value function. Renegotiation, $k_a = 0.4$.}
         \label{fig:renegotiation_ka_0.4_inbest}
     \end{subfigure}
        \caption{Initial Negotiation VS Renegotiation, $k_a = 0.4$.}
        \label{fig:IvsRka0.4inbest}
\end{figure}
Figure  \eqref{fig:IvsRka0.4inbest} describes the principal's value in the initial negotiation and renegotiation settings. In both cases we fix  $k_a = 0.4$, and $R_a = 0.131$. The reservation utilities in the renegotiation problem are computed using \eqref{reservation.reneg}. We observe that the principal benefits again from an initial negotiation compared to a renegotiation setting.

\begin{figure}[H]
     \centering
     \begin{subfigure}[b]{0.47\textwidth}
         \centering
         \includegraphics[width=\textwidth]{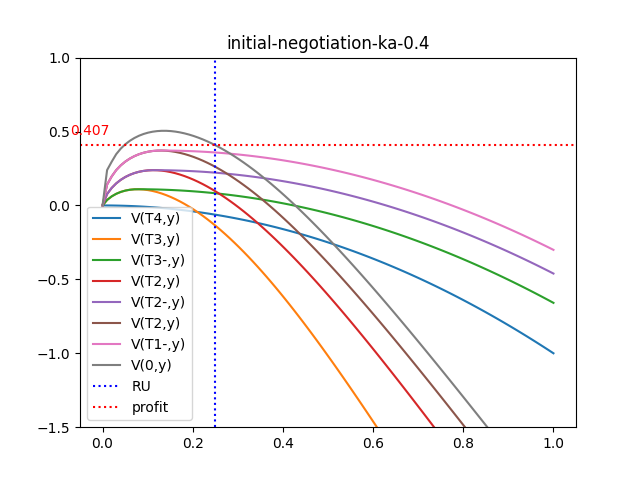}
         \caption{Principal's value function. Initial negotiation.  $k_a = 0.4$.}
         \label{fig:ka=0.3,vyru}
     \end{subfigure}
     \quad
     \begin{subfigure}[b]{0.47\textwidth}
         \centering
         \includegraphics[width=\textwidth]{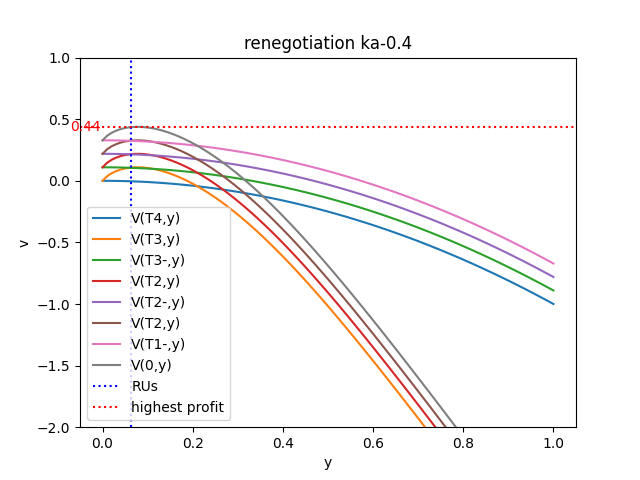}
         \caption{Principal's value function. Renegotiation. $k_a = 0.4$.}
         \label{fig:renegotiation_ka_0.3}
     \end{subfigure}
        \caption{Initial Negotiation VS Renegotiation, $k_a = 0.4$.}
        \label{fig:IvsRka0.4in}
\end{figure}
Figure  \eqref{fig:IvsRka0.4in} describes the principal's value in the initial negotiation and renegotiation settings. In both cases we fix  $k_a = 0.4$, and $R_a =0.25$. The reservation utilities in the renegotiation problem are computed using \eqref{reservation.reneg}. We observe that the principal benefits from renegotiation. The later shows that the principal's optimal negotiation setting (initial or renegotiation) depends on the contracting environment.

\appendix

\section{Appendix}

{\it{Proof of Proposition \ref{lemma.SDE}}:}
\\

Let $\bar{\xi}_N \in \Sigma_N$. Using Assumptions \ref{ass1}, \ref{ass2}, \ref{ass3} and the uniform boundedness of $k$, there exits $B_1>0$, for which
\begin{align}
        &H_t(x,y,\cdot) \text{ is convex,} \quad \forall (t,x,y) \in [0,T]\times \RR^2,\nonumber\\
    &|H_t(x,y_1,z)-H_t(x,y_2,z)| \leq B_1|y_1-y_2|, \quad \forall (t,x,z,y_1,y_2) \in [0,T]\times \RR^4. \label{hamiltonian.bound}
\end{align}
Moreover, there exists $B_2>0$, for which
\begin{align}
    \left|H_t(x,y,z,a)\right| = \left|\sup_{a\in \RR}\left\{\lambda(t,x,a)z -c(t,x,a)+yk(t,x,a)\right\} \right|\leq \left|\sup_{a\in \RR}\left\{\lambda(t,x,a)z -c(t,x,a)\right\}\right| + B_2|y|. \label{bound.hamiltonian.lemma1}
\end{align}
Next, we observe that for all $(t,x)\in [0,T]\times \RR$, the mapping $a \mapsto \lambda(t,x,a)z -c(t,x,a)$,
admits a global maximizer $a^*(t,x)$ satisfying the first order condition:
\begin{align*}
   \frac{\partial }{\partial a} \lambda(t,x,a^*(t,x))z = \frac{\partial }{\partial a} c(t,x,a^*(t,x)).
\end{align*}
Hence, using assumption \ref{ass3}, there exists $C>0$ such that
\begin{align*}
    |a^*(t,x)| \leq C \left(1+|z|\right).
\end{align*}
Plugging in the last expression into \eqref{bound.hamiltonian.lemma1}, we obtain 
\begin{equation}
    |H_t(x,y,z)| \leq B_2|y| +C |z|^2.
\end{equation}
  Applying (\cite{briandquadraticbsdeconvexgenerators2007}, Corollary 2), there exists a unique pair of processes $(Y^{N},Z^{N}) \in \mathbb{D}_{\text{exp}}([0,T]) \times \mathbb{H}([0,T])$ solving the BSDE: 
\begin{align*}
    dY^N_t &= H_t\left(X^{\bar{\xi}_{N}}_t,Y^N_t,Z^N_t\right)dt-Z^N_t\sigma_t(X_t^{\bar{\xi}_N})dW_t,  \\
    Y^N_T &= U_a\left(\xi_N\right).
\end{align*}
Therefore, for all $p\geq 0$, there exits $C_p, p'>0$, such that
$$\EE\left[\exp\left(p\left|U_a(\xi_{N-1})+Y^N_{T_{N-1}}\right|\right)\right] < C_p\EE\left[\exp\left(p'\left|\xi_{N-1}\right|\right)\right]^{1/2}\EE\left[\exp\left(2p\left|Y^N_{T_{N-1}}\right|\right)\right]^{1/2} < \infty,$$ 
where we used Cauchy-Schwarz inequality, and $\left(\bar{\xi}_N,Y\right) \in \Sigma_N\times \mathbb{D}_{\text{exp}}\left(\left[0,T\right]\right)$.
\\
Repeating the same argument we show by induction that for all $i \in \{1,\ldots,N-1\}$ there is a unique pair of processes $(Y^i,Z^i) \in \mathbb{D}_{\text{exp}}([0,T_i])\times \mathbb{H}([0,T_i])$ satisfying 
\begin{align}
    dY^i_t &= H_t\left({X}^{\bar{\xi}_N}_t,Y^i_t,Z^i_t\right)dt -Z^i_t\sigma_t(X^{\bar{\xi}_N}_t) dW_t, \nonumber  \\ 
    Y^i_{T_{i}} &= Y^{i+1}_{T_{i}}+U_a\left(\xi_{i}\right). \nonumber
\end{align}
Finally, setting $Y_t := \sum_{i=1}^{N} Y^i_t\mathbbm{1}_{(T_{i-1},T_{i}]}$, and $Z_t:=  \sum_{i=1}^{N} Z^i_t\mathbbm{1}_{(T_{i-1},T_{i}]} $, we obtain that $(Y,Z)$ belongs to $\mathbb{D}_{\text{exp}}([0,T])\times \mathbb{H}([0,T])$ and solves \eqref{lemma.SDE}. Moreover, $Y_0\in \RR$ by the Blumenthal's zero–one law. \qed
\\
\\
\textit{Proof of Proposition 2:}
\\
\\
Firstly, consider a payment scheme $\bar{\xi}_{N}\in \Sigma_N$. By Proposition  \ref{BSDE.value.agent}, there exists a unique pair of process $(Y,Z) \in \mathbb{D}_{\text{exp}}([0,T])\times \mathbb{H}([0,T])$ satisfying 
\begin{align*}
        dY_t &= H_t\left({X}^{\bar{\xi}_{N-1}}_t,Y_t,Z_t\right)dt -Z_t\sigma_t(X^{\bar{\xi}_{N-1}}_t) dW_t,\quad \WW-a.s.\nonumber \quad T_{i-1} < t \leq T_i, \quad 1 \in \left\{ 1,\ldots, N\right\} \\ 
    Y_{T_{i}^{-}} &= Y_{T_{i}}+U_a\left(\xi_{i}\right), \quad \WW-a.s., \quad i \in \left\{1,\ldots,N \right\}, \\
    Y_{T} &= U_a(\xi_N).
\end{align*}
Let $0\leq t \leq T$, and $\alpha \in \mathcal{A}(\bar{\xi}_{N-1})$. Applying Ito Lemma and using Assumptions \ref{ass1}, \ref{ass2} and \ref{ass3}, we obtain
\begin{align*}
    J_a(t,\alpha,\bar{\xi}_N) &= \EE^{\PP^\alpha}\left[\mathcal{K}_{t,T}^\alpha Y_T +\sum_{i=1}^{N-1}\mathcal{K}^\alpha_{t,T_i}U_a\left(\xi_i\right)\mathbbm{1}_{t< T_i} -\int_{t}^{T}\mathcal{K}^\alpha_{t,s}c_s(X_s,\alpha_s)ds \bigg|\mathcal{F}_t\right] \\
    &=\EE^{\PP^\alpha}\left[\mathcal{K}_{t,T_{N-1}}^\alpha Y_{T_{N-1}}+\sum_{i=1}^{N-2}\mathcal{K}^\alpha_{t,T_i}U_a\left(\xi_i\right)\mathbbm{1}_{t< T_i} -\int_{t}^{T_{N-1}}\mathcal{K}^\alpha_{t,s}c_s(X_s,\alpha_s)ds\bigg|\mathcal{F}_t\right] \\
    &-\EE^{\PP^\alpha}\left[\int_{T_{N-1}}^T\mathcal{K}^\alpha_{T_{N-1},s}\left(H_s(X_s^{\bar{\xi}_{N-1}},Y_s,Z_s)-h_s(X_s^{\bar{\xi}_{N-1}},Y_s,Z_s,\alpha_s)\right) ds \bigg|\mathcal{F}_t\right].
\end{align*}
Repeating the same argument recursively, we obtain: 
\begin{align*}
    J_a(t,\alpha,\bar{\xi}_N) = Y_t -\EE^{\PP^\alpha}\left[\int_{t}^T\mathcal{K}^\alpha_{t,s}\left(H_s(X_s^{\bar{\xi}_{N-1}},Y_s,Z_s)-h_s(X_s^{\bar{\xi}_{N-1}},Y_s,Z_s,\alpha_s)\right) ds \bigg|\mathcal{F}_t\right].
\end{align*}
Therefore, 
\begin{equation*}
    V_t^a(\bar{\xi}_N)  \leq Y_t,\quad \WW-a.s.
\end{equation*}
and the previous upper bound is attained if and only if $\alpha^*_t = \hat{\alpha}(t,X^{\bar{\xi}_{N-1}}_t,Y_t,Z_t)$ $dt\otimes \WW-a.e.$, where $\hat{\alpha}(t,x,y,z) \in \argmax h_t(x,y,z,\cdot)$. Note that $\alpha^*$ is an $\FF$-adapted process. Indeed, using assumptions \ref{ass1}, \ref{ass2}, \ref{ass3}, we obtain that $\hat{\alpha}$ satisfies
\begin{align*}
      \frac{\partial }{\partial a} \lambda(t,x,\hat{\alpha}(t,x,y,z))z- \frac{\partial }{\partial a}k_a(x,\hat{\alpha}(t,x,y,z))y = \frac{\partial }{\partial a} c(t,x,\hat{\alpha}(t,x,y,z)).
\end{align*}
Hence, applying the measurable selection theorem (Theorem 7.49, \cite{stochcontroldiscrete}), we obtain that the process $\alpha^*$ is $\FF$-adapted. Moreover, using assumptions \ref{ass1}, \ref{ass2} and \ref{ass3}  we obtain that there exits $C>0$ such that
$|\hat\alpha(t,x,y,z)| \leq C(1+|z|+|y|)$. Using the later estimate and the hypothesis on $Z$, we obtain that $\alpha^* \in \mathcal{A}(\bar{\xi}_N)$. 
\qed

\subsection{Proof of Theorem \ref{hjb:pp:in}}
Before proving Theorem \ref{hjb:pp:in} we introduce the following auxiliary result. For $n\geq 1$, we denote $\RR^{+.n}$ the set of $n$-tuples with positive entries. 
\begin{definition}
    Let $\delta:=(b,c,M) \in \RR^{+,3}$,  $a \in \RR^+$, $\gamma > 1$. We introduce the function $\phi^{\gamma,a,\delta} : [0,T]\times [0,\infty)\mapsto \RR$ defined by
    \begin{align*}
        &\phi^{\gamma,a,\delta}(t,y) :=  -ay^{\gamma} + be^{\frac{(T-t)}{T}} y^{\frac{1}{M}} +e^{c(T-t)} (1-e^{-y}). 
    \end{align*}
\end{definition}
\begin{lemma}\label{lemma_supersol}
        For all $\gamma\geq 1$, there exits $\delta_0\in \RR^{+,3}$ such that
        \begin{equation*}
            \min_{a\in\left\{1,\frac{1}{2^{\gamma - 1}},\ldots,\frac{1}{N^{\gamma - 1}} \right\}}\left\{-\phi_t^{\gamma,a,\delta_0}(t,y)- \sup_{|z|\leq K}\left\{z+ \frac{1}{2}\left(\phi^{\gamma,a,\delta_0}_{yy}(t,y)+\phi^{\gamma,a,\delta_0}_y(t,y)\right)z^2 \right\}-k_ay\phi^{\gamma,a,\delta_0}_y(t,y)\right\} \geq 0,
        \end{equation*}  
        for all $(t,y) \in [0,T]\times(0,\infty) $.
\end{lemma}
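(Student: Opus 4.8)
The plan is to verify directly that, for a suitable choice of $\delta_0=(b,c,M)$, the function $\phi:=\phi^{\gamma,a,\delta_0}$ is a classical supersolution of the stated equation, uniformly over the finite set of admissible values of $a$ and over $(t,y)\in[0,T]\times(0,\infty)$. Since the expression is a minimum over finitely many $a$, it suffices to exhibit a single triple $(b,c,M)$ making each summand nonnegative; I will arrange this by discarding every $a$-dependent contribution in the favorable direction, so that the resulting sufficient condition becomes $a$-free. Time-uniformity will come for free from $e^{(T-t)/T}\in[1,e]$ and $e^{c(T-t)}\geq 1$.

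First I would compute $\phi_t$, $\phi_y$, $\phi_{yy}$ and, crucially, the combination $q:=\phi_y+\phi_{yy}$ entering the Hamiltonian. A short computation shows that the contributions of $e^{c(T-t)}(1-e^{-y})$ to $\phi_y$ and $\phi_{yy}$ cancel, leaving
$$q=-a\gamma y^{\gamma-1}-a\gamma(\gamma-1)y^{\gamma-2}+\frac{b}{M}e^{(T-t)/T}y^{1/M-2}\Big(y-\tfrac{M-1}{M}\Big).$$
The first two terms are nonpositive, so $q$ can be positive only for $y>(M-1)/M$, while $q\to-\infty$ as $y\to 0^+$. Next I would bound the control term by convexity/concavity in $z$: for $q\geq 0$ the map $z\mapsto z+\tfrac12 qz^2$ is convex, so its maximum on $[-K,K]$ equals $K+\tfrac{K^2}{2}q$; for $q<0$ it is concave, so the constrained maximum is dominated by the unconstrained one $\tfrac{1}{2|q|}$, and also by $K$. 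Because the true supremum is no larger than the bound $S$ chosen in each regime, it is sufficient to prove $-\phi_t-k_a y\phi_y-S\geq 0$.

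I would then split $(0,\infty)$ at a threshold $\delta\in(0,(M-1)/(2M)]$. Away from the origin ($y\geq\delta$) I use $S\leq K+\tfrac{K^2}{2}(q)_+$; discarding the nonnegative terms $k_a a\gamma y^\gamma$ and $(\tfrac1T-\tfrac{k_a}{M})e^{(T-t)/T}y^{1/M}$ (nonnegative once $M>Tk_a$), the remainder is bounded below by $e^{c(T-t)}[c(1-e^{-y})-k_a y e^{-y}]\geq c(1-e^{-\delta})-k_a/e$, while the penalty satisfies $\tfrac{K^2}{2}(q)_+\leq \tfrac{K^2 b e}{2M}\delta^{1/M-1}$ since $y^{1/M-1}$ is decreasing; a large enough $c$ closes the inequality. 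Near the origin ($0<y<\delta$) one has $q<0$, so $S\leq\tfrac{1}{2|q|}$, and keeping only the $b$-term in $|q|$ gives $|q|\geq\tfrac{b(M-1)}{2M^2}y^{1/M-2}$, hence $S\leq\tfrac{M^2}{b(M-1)}y^{2-1/M}$; on the other hand $-\phi_t-k_a y\phi_y\geq b(\tfrac1T-\tfrac{k_a}{M})y^{1/M}$ after discarding nonnegative terms (the bracket $c(1-e^{-y})-k_a y e^{-y}$ being nonnegative for $c\geq 2k_a$ and $\delta$ small). The inequality then reduces to $b^2(\tfrac1T-\tfrac{k_a}{M})\tfrac{M-1}{M^2}\geq y^{2-2/M}$, which holds on $(0,\delta)$ for $b$ large because $2-2/M>0$. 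The quantifiers are thus ordered as: fix $M>\max(1,Tk_a)$ and $\delta$, then choose $b$ large for the near-origin estimate, then $c$ large for the estimate away from it.

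The main obstacle is the control of the supremum near $y=0$: the crude bound $S\leq K$ fails there, since the constant $-K$ is not absorbed by the vanishing positive terms, and one must instead exploit concavity to get $S\leq\tfrac{1}{2|q|}$ and then match decay rates, observing that $y^{1/M}$ dominates $y^{2-1/M}$ precisely because $M>1$. The remaining care is bookkeeping: checking that every discarded $a$-dependent term has the favorable sign—so that a single $(b,c,M)$ serves the whole finite set of $a$'s—and that the case $\gamma=1$, where the second term of $q$ vanishes, is covered by the same argument.
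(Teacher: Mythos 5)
Your proof is correct, and it closes the supersolution inequality by a mechanism genuinely different from the paper's. The paper first forces $\phi_y+\phi_{yy}<0$ on all of $(0,\infty)$, uniformly over the finite set of $a$'s, by taking $b$ \emph{small} relative to $M$ and imposing $M>\frac{1}{\gamma-1}$ (this is its inequality \eqref{vyvyynegative}, obtained via first-order-condition analysis of the auxiliary functions $g_1$, $g_2$); with global negativity in hand it bounds the Hamiltonian everywhere by the unconstrained maximum $\frac{1}{2\left|\phi_y+\phi_{yy}\right|}$ and reduces the lemma to the product estimate \eqref{eq.stronger}, $1\le 2c\left(1-e^{-y}-\frac{k_a}{c}ye^{-y}\right)\left|\phi_y+\phi_{yy}\right|$, verified separately on $\{y\ge\frac12\}$ and $\{0<y<\frac12\}$ using \eqref{estimate.1st}; in both regimes the positive contribution doing the work is the $c$-term. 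You instead tolerate $\phi_y+\phi_{yy}>0$ away from the origin, where you use the endpoint bound $K+\frac{K^2}{2}\left(\phi_y+\phi_{yy}\right)_+$ (which the paper never needs) and absorb it by taking $c$ large, while near the origin --- where the sign is automatically negative because $y<\frac{M-1}{M}$, with no smallness condition on $b$ --- you use the unconstrained bound and match its decay $y^{2-1/M}$ against the $b$-term $b\left(\frac1T-\frac{k_a}{M}\right)y^{1/M}$, which works precisely because $2-2/M>0$. Note that your parameter regime ($M>\max\{1,k_aT\}$, then $b$ large, then $c$ large) is essentially the opposite of the paper's ($b$ small). Your route buys two things: it dispenses with the constraint $M>\frac{1}{\gamma-1}$ and the attendant computations for $g_1,g_2,g_3$, so it genuinely covers $\gamma=1$ (the lemma is stated for $\gamma\ge1$, yet the paper's proof explicitly treats only $\gamma>1$); and, since every $a$-dependent term is discarded with favorable sign, uniformity over the finite set of $a$'s is automatic, whereas the paper must track $a$ through conditions such as \eqref{delta.1}. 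What the paper's route provides in exchange is the global structural fact $\phi_y+\phi_{yy}<0$ for its barrier, which your choice of parameters does not (and for the purposes of this lemma need not) deliver.
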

\begin{proof}
    
    Firstly,we consider the case $\gamma > 1$. In this case, for any $M > \frac{1}{\gamma-1}$, we obtain
    \begin{align*}
        &\max_{a\in \left\{1,\ldots,{\frac{1}{N}}^{\gamma - 1}\right\}}\sup_{(t,y) \in [0,T]\times(0,\infty)}\left\{
       \phi_{yy}+\phi_y \right\}\\
       =& b e^{\frac{(T-t)}{T}}\frac{1}{M}\left(  y^{\frac{1}{M} - 1} -  \frac{M -1}{M} y^{\frac{1}{M} - 2}\right)  - a\gamma y^{\gamma - 1}  - a\gamma (\gamma - 1)y^{\gamma - 2}\\
     =& y^{\gamma - 2} \left( b e^{\frac{(T-t)}{T}}\frac{1}{M}\left(  y^{\frac{1}{M} + 1 - \gamma} -  \frac{M -1}{M} y^{\frac{1}{M} - \gamma}\right) - a\gamma y  - a\gamma(\gamma - 1)  \right)\\
     \leq& y^{\gamma - 2} \left( b e^{\frac{(T-t)}{T}}\frac{1}{M}\left(  y^{\frac{1}{M} + 1 - \gamma} -  \frac{M -1}{M} y^{\frac{1}{M} - \gamma}\right) - \frac{\gamma(\gamma - 1)}{N^{\gamma - 1}}  \right)\\
     <& y^{\gamma - 2} \left( b e\frac{1}{M}  {\left(\frac{\gamma M - 1}{M}\right)}^{\frac{1}{M} - \gamma}  \left(\frac{M - 1}{M(\gamma - 1) - 1}\right)^{\frac{1}{M} - \gamma + 1 }   - \frac{\gamma(\gamma - 1)}{N^{\gamma - 1}}   \right)\\
     <& y^{\gamma - 2} \left( b e\frac{1}{M}    - \frac{\gamma(\gamma - 1)}{N^{\gamma - 1}}   \right).
    \end{align*}
    The third inequality comes from applying the first order condition  to the function $  g_1(y):=y^{\frac{1}{M} + 1 - \gamma} -  \frac{M -1}{M} y^{\frac{1}{M} - \gamma}$, which is satisfied at {$y^{*} := \frac{(M-1)(\gamma M - 1)}{M(M(\gamma - 1) - 1)} \geq  \frac{M - 1}{M}$}. The fourth inequality comes from the monotonicity of the function
    $$g_2(M) :=  {\left(\frac{\gamma M - 1}{M}\right)}^{\frac{1}{M} - \gamma} \left(\frac{M - 1}{M(\gamma - 1) - 1}\right)^{\frac{1}{M} - \gamma + 1 },$$
 restricted to $\left(\frac{1}{\gamma-1},\infty\right)$.
    
    Then, for any $ 0< b < \frac{1}{e} M (\frac{1}{N})^{\gamma - 1} \gamma(\gamma - 1)$, the following inequality holds
    \begin{equation}\label{vyvyynegative}
        \max_{a\in \left\{1,\ldots,{\frac{1}{N}}^{\gamma - 1}\right\}}\sup_{(t,y) \in [0,T]\times(0,\infty)}\left\{
       \phi_{yy}+\phi_y \right\} < 0.
    \end{equation}
   Using (\ref{vyvyynegative}), we have that for all $ a\in \left\{1,\ldots,{\frac{1}{N}}^{\gamma - 1}\right\}$, and $\delta= (b,c,M)\in \RR^{+,3}$, $M > \frac{1}{\gamma - 1} $, $ b<\frac{1}{e} M (\frac{1}{N})^{\gamma - 1} \gamma(\gamma - 1)$:
\begin{align*}\label{supervineq}
         -\sup_{|z|\leq K}\left\{z+ \left(\phi^{a,\delta}_{yy}(t,y)+\phi^{a,\delta}_y(t,y)\right){\frac{z^2}{2}} \right\} 
         &\geq -\sup_{z\in \RR}\left\{z+ \left(\phi^{a,\delta}_{yy}(t,y)+\phi^{a,\delta}_y(t,y)\right)  {\frac{z^2}{2}}  \right\} \\
          &=\frac{1}{2\left( be^{(T-t)p}\left(\frac{1}{M} y^{\frac{1}{M} - 1}  - \frac{M - 1}{M^{2}}y^{\frac{1}{M}-2}\right) - \gamma a y^{\gamma - 1}   - \gamma(\gamma - 1)a y^{\gamma - 2} \right)}
\end{align*}

Hence, for any $\delta = (b,c,M)\in \RR^{+,3}, \quad M > \frac{1}{\gamma - 1} ,\quad b<\frac{1}{e} M (\frac{1}{N})^{\gamma - 1} \gamma(\gamma - 1),$ and  $a\in \left\{1,\ldots,{\frac{1}{N^{\gamma - 1}}}\right\}$,
 the following inequality holds
\begin{align}
        &-\phi_t^{a,\delta_0}(t,y)- \sup_{|z|\leq K}\left\{z+ \left(\phi^{a,\delta_0}_{yy}(t,y)+\phi^{a,\delta_0}_y(t,y)\right){\frac{z^2}{2}} \right\}-k_ay\phi^{a,\delta_0}_y(t,y)  \nonumber \\
        &\geq \frac{1}{2\left( be^{\frac{(T- t)}{T}}(\frac{1}{M} y^{\frac{1}{M} - 1}  - \frac{M - 1}{M^{2}}y^{\frac{1}{M}-2}) - \gamma a y^{\gamma - 1}   - \gamma(\gamma - 1)a y^{\gamma - 2} \right)} +b (\frac{1}{T} - \frac{k_a}{M}) e^{\frac{(T -t)}{T}} y^{\frac{1}{M}} \nonumber \\
        &+ \gamma(k_a) a y^{\gamma} + e^{(T-t)c}(c - ce^{-y} - k_ay e^{-y}),
\end{align}

    We will show that for all  $$\delta_0:=(b,M,c)\in \RR^{+,3},  M > \max\{k_aT, \frac{1}{\gamma - 1},2\}, c > 3k_a, b<\frac{1}{e} M (\frac{1}{N})^{\gamma - 1} \gamma(\gamma - 1),$$ the inequality
    \begin{equation}\label{eq.stronger}
     1 \leq 2c\left(1 - e^{-y} - \frac{k_a}{c}ye^{-y}\right)\left( -  be^{\frac{(T- t)}{T}}\left(\frac{1}{M} y^{\frac{1}{M} - 1}  - \frac{M - 1}{M^{2}}y^{\frac{1}{M}-2}\right) + \gamma a y^{\gamma - 1}   + \gamma(\gamma - 1)a y^{\gamma - 2} \right),
    \end{equation}
    holds for any $a \in \{1,\ldots,{\frac{1}{N^{\gamma - 1}}}\}$. The previous implies that 
     \begin{align*}
          &\min_{a\in \{1,\ldots,\frac{1}{N}\}}\left\{-\phi_t^{a,\delta_0}(t,y)- \sup_{|z|\leq K}\left\{z+ \frac{1}{2}\left(\phi^{a,\delta_0}_{yy}(t,y)+\phi^{a,\delta_0}_y(t,y)\right)z^2 \right\}-k_ay\phi^{a,\delta_0}_y(t,y)\right\} \geq 0.
     \end{align*}
    
   To show \eqref{eq.stronger}, we use the following estimate:
    \begin{equation}\label{estimate.1st}
    1 - e^{-y} - \frac{k_a}{c}y e^{-y} \geq \frac{1}{4}y\mathds{1}_{\{0 \leq y < \frac{1}{2}\}} + \frac{1}{4}\mathds{1}_{\{y\geq\frac{1}{2}\}}.
    \end{equation}
    From \eqref{estimate.1st}, we obtain
    \begin{align*}
    &2c\left(1 - e^{-y} - \frac{k_a}{c}ye^{-y} \right)\left( -b e^{\frac{(T-t)}{T}}\left(\frac{1}{M}y^{\frac{1}{M} - 1} - \frac{M-1}{M^{2}} y^{\frac{1}{M} - 2}\right) +  \gamma a y^{\gamma - 1}   + \gamma(\gamma - 1)a y^{\gamma - 2} \right)\\
    \geq&\frac{c}{2}\left( -b e^{\frac{(T-t)}{T}}\left(\frac{1}{M}y^{\frac{1}{M} - 1} - \frac{M-1}{M^{2}} y^{\frac{1}{M} - 2}\right) + \gamma a y^{\gamma - 1}     + \gamma(\gamma - 1)a y^{\gamma - 2} \right)\\ 
    \geq& \frac{c y^{\gamma - 1}}{2}\left( -b e^{\frac{(T-t)}{T}}\left(\frac{1}{M}y^{\frac{1}{M} - \gamma} - \frac{M-1}{M^{2}} y^{\frac{1}{M} - 1 - \gamma}\right) + \gamma a  \right)\\ 
    \geq&   {\frac{ca}{2^{\gamma}}} \left( -\frac{be}{a}{\left(\frac{M - 1}{\gamma M - 1}\right)}^{{\frac{1}{M}} - \gamma}  \left(\frac{(\gamma+ 1)M - 1}{M}\right)^{\frac{1}{M} - \gamma - 1} + \gamma   \right) \\
    \geq&  {\frac{ca}{2^{\gamma}}} \left( -\frac{be}{a}{\left(\frac{M^{*} - 1}{\gamma M^{*} - 1}\right)}^{{\frac{1}{M^{*}}} - \gamma}  + \gamma   \right),
    \end{align*}
    where $M^* := \max \{2,\frac{1}{\gamma - 1}\} $.
    
    The first inequality comes from \eqref{estimate.1st}, and the third inequality is obtained from applying the first order condition to $(\frac{1}{M}y^{\frac{1}{M} - 1} - \frac{M-1}{M^{2}} y^{\frac{1}{M} - 2})$. The second last inequality is a consequence of the nonnegative and nonincreasing property of the function  
    $$g_3(M):={\left(\frac{M - 1}{\gamma M - 1}\right)}^{{\frac{1}{M}} - \gamma}\left(\frac{(\gamma+ 1)M - 1}{M}\right)^{\frac{1}{M} - \gamma - 1}$$ 
    in the domain $M \geq M^*$. 
    Hence, for all $\delta_0^1 := (b,c,M) \in \RR^{3,+}$ satisfying  
     \begin{equation}\label{delta.1}
      c >\frac{ 2^\gamma  }{ a \left( -\frac{b}{a}e{(\frac{M^{*} - 1}{\gamma M^{*} - 1})}^{{\frac{1}{M^{*}}} - \gamma}  + \gamma   \right) } , \quad
      M \geq \max \left\{2,\frac{1}{\gamma - 1}\right\} , \quad
      b< \frac{\gamma a }{e \left( -\frac{b}{a}e{(\frac{M^{*} - 1}{\gamma M^{*} - 1})}^{{\frac{1}{M^{*}}} - \gamma}  + \gamma   \right)},
     \end{equation}
    we obtain that \eqref{eq.stronger} holds for all $y \geq \frac{1}{2}$.
     
    Next, we consider $0 < y < \frac{1}{2}$. In this case, applying the estimate \eqref{estimate.1st}, we obtain that for all $a \in \{1,\ldots,\frac{1}{N^{\gamma-1}} \}$:
     \begin{align*}
    &2c\left(1 - e^{-y} -\frac{k_a}{c}y e^{-y} \right)\left( -b e^{\frac{(T-t)}{T}}\left(\frac{1}{M}y^{\frac{1}{M} - 1} - \frac{M-1}{M^{2}} y^{\frac{1}{M} - 2}\right) +  \gamma a y^{\gamma - 1}   + \gamma(\gamma - 1)a y^{\gamma - 2}  \right)\\
    \geq&\frac{c}{2}y\left( b e^{\frac{(T-t)}{T}}\left( - \frac{1}{M}y^{\frac{1}{M} - 1} + \frac{M-1}{M^{2}} y^{\frac{1}{M} - 2}\right) +  \gamma a y^{\gamma - 1}   + \gamma(\gamma - 1)a y^{\gamma - 2} \right)\\
    \geq&\frac{c}{2}y^{\gamma - 1}\left( b e^{\frac{(T-t)}{T}}\left( - \frac{1}{M}y^{\frac{1}{M} + 1 - \gamma} + \frac{M-1}{M^{2}} y^{\frac{1}{M} - \gamma}\right)    + \gamma(\gamma - 1)a  \right) \\
    \geq& \frac{c}{2}y^{\gamma - 1}\left( b e^{\frac{(T-t)}{T}}\left( - \frac{1}{M}y^{\frac{1}{M} + 1 - \gamma} + \frac{M-1}{M^{2}} y^{\frac{1}{M} - \gamma}\right)    + \frac{\gamma(\gamma - 1)}{N^{\gamma-1}}  \right). 
    \end{align*}
    Next, we define the following functions
    \begin{equation}
        \begin{split}
             \underline{\varphi}(t,y) &:=   \frac{1}{2}y^{\gamma - 1}\left(b e^{(T-t)p}\left( - \frac{1}{M}y^{\frac{1}{M} + 1 - \gamma} + \frac{M-1}{M^{2}} y^{\frac{1}{M} - \gamma} \right) + \frac{\gamma(\gamma - 1)}{N^{\gamma-1}} \right),\\
                      \underline{\varphi}^{1}(t,y) &:=  \left(b e^{(T-t)p}\left( - \frac{1}{M}y^{\frac{1}{M} + 1 - \gamma} + \frac{M-1}{M^{2}} y^{\frac{1}{M} - \gamma} \right) + \frac{\gamma(\gamma - 1)}{N^{\gamma-1}} \right) .
        \end{split}
    \end{equation}
    For all $0 \leq t\leq T$, and $\bar{\varphi}>0$, there exists $y_1:= y_1(\bar{\varphi},b,M)$, such that $\underline{\varphi}^1(t,y) > \bar{\varphi}$,  $ y \in (0,y_1]$. 
    \\
    Additionally, we know that $ - \frac{1}{M}y^{\frac{1}{M} + 1 - \gamma} + \frac{M-1}{M^{2}} y^{\frac{1}{M} - \gamma} > 0$, for all $ 0< y_1< y < \frac{M-1}{M} $.  Applying the first order condition to $\underline{\varphi}^{1}(t,\cdot)$, we obtain that $y^* =\argmin \underline{\varphi}(t,\cdot) = \frac{(M-1)(\gamma M - 1)}{M(M(\gamma - 1) - 1)} \geq \frac{M - 1}{M}  > \frac{1}{2}$. The previous implies that for all $0 \leq t \leq T$, the function $\underline{\varphi}^{1}(t,\cdot)$ is decreasing on the domain  $ 0< y < \frac{(M-1)(\gamma M - 1)}{M(M(\gamma - 1) - 1)} $. 
    Hence,
    $$\underline{\varphi}^{1}(t,y) \geq \underline{\varphi}^{1}\left(t,\frac{1}{2}\right) >  \underline{\varphi}^{1}\left(t,\frac{M - 1}{M}
    \right) \geq \gamma(\gamma - 1) > 0 ,\quad y \in \left[y_1,\frac{1}{2}\right].$$
   Additionally, using that $\underline{\varphi}(t,\cdot)$ is continuous on $[y_1,\frac{1}{2}]$, we obtain that for all $c > \frac{2}{\gamma(\gamma - 1)} $,   $$\min_{y\in [y_1,\frac{1}{2}]} \underline{\varphi}(t,y) \geq \varphi^* > 0 ,$$
   where $\varphi^*(\bar{\varphi},b,M) := \left(y_1(\bar{\varphi},b,M)\right)^{\gamma-1} $.
    Hence, for all $\delta_0^2 := (b,c,M)\in \RR^{+,3} $ satisfying  
    \begin{equation}\label{delta.2}
        c >N^{\gamma-1}\min\left\{\bar{\varphi},\varphi^*(\bar{\varphi},b,M)\right\} ,\quad M \geq \frac{1}{\gamma - 1} ,\quad b< \min \left\{\frac{1}{e} \gamma(\gamma - 1) M \left(\frac{1}{N}\right)^{\gamma - 1} ,\frac{1}{N^{\gamma-1}}\right\},
    \end{equation}
    we obtain that \eqref{eq.stronger} holds for all $y \in \left(0,\frac{1}{2}\right]$. \\
    
    Combining (\ref{delta.1}) and (\ref{delta.2}), we conclude that there exists $\delta_0 \in \RR^{3,+}$ for which the inequality in Lemma \ref{lemma_supersol} holds.
    \qed

\end{proof}

\begin{definition}
    Let $\gamma > 1$ and $\delta_0 \in \RR^{+,3}$  be the 3-tuple determined in Lemma \ref{lemma_supersol}. We introduce the function ${\phi}^{\gamma,\delta_0}: [0,T]\times [0,\infty) \mapsto \RR $ defined as
    \begin{align*}
        \phi^{\gamma,\delta_0}(t,y) := \sum_{i=1}^N \phi^{\gamma,\frac{1}{(1+N-i)^\gamma},\delta_0}(t,y)\mathbbm{1}_{(T_{i-1},T_{i}]}. 
    \end{align*}
\end{definition}
\begin{remark}
The function $\phi^{\gamma,\delta_0}$ plays an important role in our approach. It is carefully designed to obtain an appropriate upper bound of the principal´s value function. The later is crucial to successfully employ the results from \cite{bouchard2012weak}.
\end{remark}
Next, we introduce the open state constraint value function:
\begin{equation*}
    \hat{V}(t,x,y) := \sup_{Y_0 \geq R_a}\sup_{(Z,\bar{\xi}_{N-1}) \in \hat{\mathcal{U}}(t,x,y)}\EE\left[X^{t,x,Z,\bar{\xi}_{N-1}}_T-\left(Y^{t,y,Z,\bar{\xi}_{N-1}}_T\right)^{\gamma}  \right],
\end{equation*}
where 
\begin{align*}
     &X^{t,x,Z,\bar{\xi}_{N-1}}_s = x-\sum_{j=1}^{N-1} \xi_{j}\mathbbm{1}_{t<T_j\leq s} +\int_t^sZ_rdr +  B_t-B_s, \\
     &Y^{t,x,Z,\bar{\xi}_{N-1}}_s = y-\sum_{j=1}^{N-1} U_a\left(\xi_{j}\right)\mathbbm{1}_{t<T_j\leq s} +\int_t^s\left(\frac{1}{2}Z^2_r+k_aY_r\right)dr + \int_t^sZ_rdB_r, 
\end{align*}
and $\hat{\mathcal{U}}(t,x,y):= \left\{(Z,\bar{\xi}_{N-1}) \in \mathcal{U}(t,T) \big| \hspace{1 mm} Y_t^{t,y,Z,\bar{\xi}_N}>0 \hspace{1 mm} dt\otimes \PP -a.e. \right\}$.
\\
We introduce the lower and upper semicontinuous envelopes of $\hat{V}$, defined for all $(t,x,y) \in [0,T]\times \RR\times (0,\infty)$: 
\begin{align*}
    \hat{V}_*(t,x,y)&:=\liminf_{(t',x',y')\rightarrow (t,x,y), y' > 0}\hat{V}(t',x',y'), \\
    \hat{V}^*(t,x,y)&:=\limsup_{(t',x',y')\rightarrow (t,x,y),  y' > 0}\hat{V}(t',x',y').
\end{align*}
        Firstly, we show Theorem $2.1, 2.2, 2.3,$ simultaneously by backward induction on the regions  $\mathcal{R}_i := [T_{i-1},T_{i})\times \RR\times [0,\infty) $, $1\leq i \leq N$.  \\

    \textit{Proof of Theorem 2.1 on $\mathcal{R}_N$:}\\

Firstly, consider $(t,x,y) \in (T_{N-1},T]\times \RR \times (0,\infty)$, a control process $(Z,\bar{\xi}_{N-1}) \in \hat{\mathcal{U}}(t,x,y)$, and an $\FF$-stopping time $\tau$. Applying Ito Lemma:
\begin{align}\label{super.sol.ineq}
  x+\phi^{\gamma,\delta_0}(t,y) &= X^{t,x,\bar{\xi}_N,Z}_{\tau}+\phi^{\gamma,\delta_0}(\tau,Y^{t,x,\bar{\xi}_N,Z}_\tau) \\
  &- \int_t^\tau \left(\phi_t^{\gamma,\delta_0}(s,Y^{t,x,\bar{\xi}_N,Z}_s)+\mathcal{L}^{Z_s}\phi^{\gamma,\delta_0}(s,Y^{t,x,\bar{\xi}_N,Z}_s) \right)ds+M^{Z}_t \nonumber
\end{align}
where 
\begin{equation*}
    M_\tau^{t,Z} :=W_{\tau}-W_t+ \int_t^\tau \phi_y(s,Y_s^{t,y,\bar{\xi}_N,Z})dW_s,
\end{equation*}
and, for any $z \in \RR$, the operator $\mathcal{L}^z$ is defined as
\begin{align*}
\mathcal{L}^z f (y)&: =  z+ \frac{1}{2}\left(f_{yy}(y)+f_y(y)\right)z^2+k_a f_y(y)y.
\end{align*}
\\
Note that $M^{t,Z}$ is a local martingale. 
Next, we introduce a localizing sequence $(\tau_n)_{n\in \NN} $ defined by 
$$\tau_n := \left\{s\geq t : \bigg|\phi_y\left(s,Y_s^{t,y,\bar{\xi}_N,Z}\right)\bigg| \geq n  \right\}\wedge T.$$ 
\\
Fixing $\tau := \tau_n$, and taking expectations in \eqref{super.sol.ineq}, we obtain from Lemma \ref{lemma_supersol}: 
\begin{equation*}
    x+\phi^{\gamma,\delta_0}(t,y) \geq \EE\left[ X^{t,x,\bar{\xi}_N,Z}_{\tau_n}+\phi^{\gamma,\delta_0}(\tau_n,Y^{t,y,\bar{\xi}_N,Z}_{\tau_n})\right].
\end{equation*}
Next, using that the controls admissible $Z$ are uniformly bounded and standard SDE estimates, we have
\begin{align*}
\EE\left[ \sup_{T_{N-1} \leq s\leq T} \left|X^{t,x,\bar{\xi}_N,Z}_{s}+\phi^{\gamma,\delta_0}(s,Y^{t,y,\bar{\xi}_N,Z}_{s})\right|^2\right] < \infty. 
\end{align*}
Thus, $\left(X^{t,x,\bar{\xi}_N,Z}_{\tau_n}+\phi^{\gamma,\delta_0}(\tau_n,Y^{t,y,\bar{\xi}_N,Z}_{\tau_n})\right)_{n\geq 1}$ is uniformly integrable. Hence, using that $\lim_{n\rightarrow\infty} \tau_n = T, \PP-a.s.$, and the continuity of $\phi^{\gamma,\delta_0}$:
\begin{equation}\label{ineq.value}
    x + \phi^{\gamma,\delta_0}(t,y)  \geq \EE\left[ X^{t,x,\bar{\xi}_N,Z}_{T}+\phi^{\gamma,\delta_0}(T,Y^{t,y,\bar{\xi}_N,Z}_{T})\right].
\end{equation}
Taking the supremum over $(Z,\bar{\xi}_{N-1}) \in \hat{\mathcal{U}}(t,x,y)$ in \eqref{ineq.value} and using that $\phi(T,y) \geq -y^{\gamma}$ for all $y>0$, we obtain
\begin{align}\label{limit.lower.envelope}
     x + \phi^{\gamma,\delta_0}(t,y) \geq \hat{V}(t,x,y).
\end{align}
Moreover, taking the control $\hat{Z}:=0$, we have 

\begin{equation*}
    x-e^{\gamma k_a(T-t)}y^\gamma \leq \hat{V}(t,x,y) \leq x + \phi^{\gamma,\delta_0}(t,y).
\end{equation*}
The previous implies
\begin{equation}\label{eq.cont.value}
    x = \hat{V}_*(t,x,0) = \lim_{(t',x',y') \rightarrow (t,x,0), y'>0} \hat{V}_*(t,x,y).
\end{equation}
Next we apply (\cite{bouchard2012weak}, Proposition $4.11$). Firstly, we check that all the assumptions of the Proposition 4.11 are satisfied. Indeed, equation \eqref{eq.cont.value} implies that $\hat{V}_{*}$ is continuous on $\{y=0\}$. Due to the uniformly boundedness of the controls we have that the drift of the state processes $(X,Y)$ grows linearly in $(x,y) \in \RR\times (0,\infty)$. Additionally, for all $y>0$, the control $\hat{Z} := 0$ satisfies $Y_s^{t,y,\hat{Z}}>0, \PP-a.s.$
Hence, $\hat{V}$ is the unique (discontinuous) viscosity solution to the following state constrained HJB equation: 
\begin{align}\label{pde.last.period}
    -\varphi_t - \sup_{|z|\leq K}\left\{\frac{1}{2}\left(\varphi_y+\varphi_{yy}\right)z^2+\varphi_x z +\frac{1}{2}\varphi_{xx}z^2+2\varphi_{xy}z\right\} -   k_a y \phi_y   &= 0,\\
    \varphi(T,x,y) &= x -y^{\gamma}, \nonumber
\end{align}
in the class of functions with polynomial growth and  lower semi continuous envelope continuous at $[T_{N-1},T]\times\{y=0\}$.  Moreover, using (\cite{bouchard2012weak}, Corollary 4.13), we have that $V(t,x,y) = \hat{V}(t,x,y)$, for all $(t,x,y) \in (T_{N-1},T]\times \RR\times (0,\infty)$. Using that $V$ is continuous on $[T_{N-1},T]\times \RR \times [0,\infty)$ and the fact that the value is separable in $(x,y)$\footnote{We show this property trivially from the definition of the principal's value in \eqref{principal.value.benchmark}. }, there exists a continuous function $v : (T_{N-1},T]\times [0,\infty) \mapsto \RR$, such that
\begin{equation*}
    V(t,x,y) = x + v(t,y).
\end{equation*}
Plugging in the last expression into \eqref{pde.last.period}, we obtain that $v$ is the unique viscosity solution to the following state constrained HJB equation: 
\begin{align*}
        -\varphi_t - \sup_{|z|\leq K}\left\{\frac{1}{2}\left(\varphi_y+\varphi_{yy}\right)z^2+z\right\}   -  k_a y v_y    &= 0, \quad (t,y) \in [T_{N-1},T]\times (0,\infty),\\
    \varphi(T,x,y) &= -y^{\gamma},\quad y>0,
\end{align*}
in the class of functions with polynomial growth and  lower semi continuous envelope continuous at $\{y=0\}$. \qed \\

\textit{Proof of Theorem $2.2$ on $\mathcal{R}_N$:}\\

From the proof of $3.1$ on $\mathcal{R}_N$, we know that $V(T_{N-1},x,y) = x + v(T_{N-1},y)$ is continuous on $\RR\times [0,\infty)$. We write 
            $$
            f_i(x, y)=\max_{\eta \in \beta(x, y)} h(x, y, \eta)
            $$

            where $h(x, y, \eta)=V\left(T_{N-1}, x- \eta^{\gamma},  
         y -\eta \right)$, and $\beta(x, y)=\left\{\eta \in[0,\infty): \eta \leq y \right\}$.\\Clearly, $h : \mathbb{R} \times [0,\infty) \times [0,y] \rightarrow \mathbb{R}$ is a continuous function due to the continuity of  $V(T_{N-1},\cdot,\cdot)$ shown in the previous section. Moreover, $\beta(x,y)$ is a continuous set-valued map with non-empty compact values by Theorems 17.20, and 17.21 in \cite{guide2006infinite}.  Using Berge's maximum theorem and (\cite{guide2006infinite},Lemma 17.30) we obtain that $f_i$ is continuous and the lowest maximizer $\eta_{i}^{*}$ is lower semicontinuous. \qed \\
         \\
         \textit{Proof of Theorem $2.3$ on $\mathcal{R}_N$:}
         \\
         \\
        Let $t \in [T_{N-2},T_{N-1})$. We will show that for all $\epsilon>0$, and $(t,x,y) \in [T_{N-2},T_{N-1})\times \RR \times [0,\infty)$: 
        \begin{equation}\label{eq::cdppn}
            V(t, x, y)=\sup _{(z, \xi_{N-1}) \in \mathcal{U}(t,x, y)} \mathbb{E}\left[V\left(T_{N-1}+\varepsilon, X_{T_{N-1}+\varepsilon}^{t, x, Z, \xi_{N-1}}, Y_{T_{N-1}+\varepsilon}^{t, y,Z,\xi_{N-1}}\right)\right].
        \end{equation}
        Note that the right hand side of \eqref{eq::cdppn} is well defined as we showed that $V$ is continuous in $(T_{N-1},T_{N-2}) \times \RR \times [0,\infty)$ and therefore, measurable. 
        Next, we consider the principal's objective:
        $$J(t,x,y,Z,\xi_{N-1}) = \mathbb{E}\left[X^{t,x,Z,\xi_{N-1}}_{T} - \left(Y^{t,y,Z,{\xi}_{N-1}}_{T}\right)^{\gamma}\right].$$
        Evidently, we have that, for all $ (t,x,y) \in (T_{N-2},T_{N-1}) \times \mathbb{R} \times [0,\infty)$, the following inequality holds:
        \begin{equation}\label{ineq::concate1}
            J(t,x,y,Z,\xi_{N-1}) \leq \sup _{(Z, \bar{\xi}_{N-1}) \in \mathcal{U}(t,x, y)} \mathbb{E}\left[V\left(T_{N-1}+\varepsilon, X_{T_k+\varepsilon}^{t, x, Z, \xi_{N-1}}, Y_{T_k+\varepsilon}^{t, y,Z,\xi_{N-1}}\right)\right].
        \end{equation}
        Hence, 
        \begin{equation*}
            V(t,x,y) \leq \sup_{(Z,\xi_{N-1})\in {\mathcal{U}}(t,x,y)}\mathbb{E}\left[V\left(T_{N-1}+\varepsilon, X_{T_k+\varepsilon}^{t, x, Z, \xi_{N-1}}, Y_{T_k+\varepsilon}^{t, y,Z,\xi_{N-1}}\right)\right].
        \end{equation*}
        Repeating the same argument using $\hat{\mathcal{U}}$ as the space of admissible controls, we obtain 
        \begin{align*}
             \hat{V}(t,x,y) \leq \sup_{(Z,\xi_{N-1})\in \hat{\mathcal{U}}(t,x,y)}\mathbb{E}\left[V\left(T_{N-1}+\varepsilon, X_{T_k+\varepsilon}^{t, x, Z, \xi_{N-1}}, Y_{T_k+\varepsilon}^{t, y,Z,\xi_{N-1}}\right)\right].
        \end{align*}
        Next, we show the reverse inequality in \eqref{eq::cdppn}. Firstly, we observe that
        \begin{equation}
            V(t,x,y) \geq \hat{V}(t,x,y) \geq \hat{V}(T_{N-1}+\epsilon,x,e^{k_a\max\{T_{N-1}+\epsilon-t,0\}}y), 
        \end{equation}
        where the second inequality holds by considering the controls $\hat{Z}_t := Z_t\mathbbm{1}_{t > T_{N-1}+\epsilon}$, where $Z \in \mathcal{U}(t,x,e^{ka(T_{N-1}-t)}y)$. 
        \\
        Additionally, we observe that the mapping $(t,x,y) \mapsto \hat{V}(T_{N-1}+\epsilon,x,  e^{k_a\max\{ T_{N-1}+\epsilon-t,0\} }y)$ is continuous on $ \mathcal{R}_N$ as $V$ is continuous in $\mathcal{R}_N$.  Hence, for some $\delta>0$ small enough, we consider the set $B := (t-\delta,T_{N-1}+\epsilon)\times \RR \times (0,\infty) $ and realize that $T_{N-1}+\epsilon$ is the first exist time of $\left(s,X^{t,Z,\xi_{N-1}}_s,Y^{t,Z,\xi_{N-1}}_s\right)_{s\geq t}$ from $B$. 
       Using  (\cite{bouchard2012weak}, Lemma 4.9 (ii)) we have
       \begin{equation}\label{ineq.dpp2}
        V(t,x,y)\geq    \hat{V}(t,x,y) \geq \EE \left[\hat{V}(T_{N-1}+\epsilon,X^{t,x,Z,\xi_{N-1}}_{T_{N-1}+\epsilon},Y^{t,y,Z,\xi_{N-1}}_{T_{N-1}+\epsilon}) \right],
       \end{equation}
        for all $(Z,\xi_{N-1}) \in \mathcal{U}(t,x,y)$. 
        Finally, noticing that $\hat{V} = V$ on $\mathcal{R}_N$ and taking supremum on \eqref{ineq.dpp2}, we obtain:
        \begin{equation}
            V(t,x,y) \geq \sup_{(Z,\xi_{N-1})\in \hat{\mathcal{U}}(t,x,y)} \EE \left[V(T_{N-1}+\epsilon,X^{t,x,Z,\xi_{N-1}}_{T_{N-1}+\epsilon},Y^{t,y,Z,\xi_{N-1}}_{T_{N-1}+\epsilon}) \right].
        \end{equation}
         Due to the continuity and local boundedness of $V$ on $\mathcal{R}_N$, we have taking $\epsilon \to 0$:
        \begin{equation}\label{v.region}
        V(t, x, y) \leq \sup _{Z \in \mathcal{U}(t,x, y)} \mathbb{E}\left[f_{N-1}\left( X_{T^{-}_{N-1}}^{t, x,Z}, Y_{T^{-}_{N-1}}^{t, y,Z}\right)\right] \quad (t,x,y) \in (T_{N-2},T_{N-1}) \times \mathbb{R} \times [0,\infty),
        \end{equation}
        \begin{equation}\label{v.hat.region}
        \hat{V}(t, x, y) = \sup _{Z  \in \hat{\mathcal{U}}(t,x, y)} \mathbb{E}\left[f_{N-1}\left( X_{T^{-}_{N-1}}^{t, x,Z}, Y_{T^{-}_{N-1}}^{t, y,Z}\right)\right] \quad (t,x,y) \in (T_{N-2},T_{N-1}) \times \mathbb{R} \times (0,\infty).
        \end{equation}
        Let $(t,x,y) \in (T_{N-2},T_{N-1})\times [0,\infty]\times \RR $.
        We consider the sequence $(t_n,x_n,y_n) \to (T_{N-1},x,y)$, $t_n < T_{N-1}$, $y_n >0$. Based on the claim \eqref{v.hat.region}, we have the following inequalities :
        $$\mathbb{E}\left[f_{N-1}\left(X_{T^{-}_{N-1}}^{t_n, x_n, Z^n}, Y_{T^{-}_{N-1}}^{t_n, y_n, Z^n}\right)\right] \leq \hat{V}\left(t_n, x_n, y_n\right) \leq \mathbb{E}\left[f_{N-1}\left(X_{T^{-}_{N-1}}^{t_n, x_n, Z^n}, Y_{T^{-}_{N-1}}^{t_n, y_n, Z^n}\right)\right]+\frac{1}{n},$$
        where $Z^{n}$ is a $\frac{1}{n}$-optimal control.
        Moreover, we have
        \begin{equation*}
        \begin{split}
            & \mathbb{E}\left[\left|\left(X_{T^{-}_{N-1}}^{t_n, x_n, Z^n}, Y_{T^{-}_{N-1}}^{t_n, y_n,Z^n}\right)-(x, y)\right|^2\right] \\
            & \leq 2 \mathbb{E}\left[\left|\left(X_{T^{-}_{N-1}}^{t_n, x_n,  Z^n}, Y_{T^{-}_{N-1}}^{t_n, y_n,Z^n}\right)-\left(x_n, y_n\right)\right|^2\right]+2\left(x_n-x\right)^2+2\left(y_n-y\right)^2 \\
            & \leq 2 C e^{C\left(T_{N-1}-t_n\right)}\left(\left|x_n\right|^2+\left|y_n\right|^2+C\right)\left(T_{N-1}-t_n\right)+2\left(x_n-x\right)^2+2\left(y_n-y\right)^2,
        \end{split}
        \end{equation*}
        for some positive constant $C>0$. In the previous inequalities we used standard SDE estimates (see for example: (\cite{pham2009continuous}, Theorem 1.3.16)) and uniform boundedness of $Z$.  Combined with the continuity of $f_{N-1}$, we obtain:
        \begin{equation}\label{eq::limfn-1}
             \hat{V}(T^{-}_{N-1},x,y) = f_{N-1}(x,y).
        \end{equation}
        Applying the same argument to \eqref{v.region} taking a suitable sequence, we obtain: 
        \begin{align*}
            V^*(T_{N-1}^{-},x,y) \leq f_{N-1}(x,y),
        \end{align*}
        where for all $(t,x,y)\in [0,T]\times \RR\times [0,\infty)$, we define
        \begin{align*}
            V^*(t^{-},x,y) &= \limsup_{{(t',x',y') \rightarrow (t,x,y)}, \hspace{1mm} t'<t,\hspace{1mm}  y'>0}V(t',x',y'), \\ 
            V_*(t^{-},x,y) &= \liminf_{{(t',x',y') \rightarrow (t,x,y)}, \hspace{1mm} t'<t,\hspace{1mm}  y'>0 }V(t',x',y').
        \end{align*}
        Furthermore, we consider a sequence $(\tilde{t}_n,\tilde{x}_n,\tilde{y}_n) \mapsto (T_{N-1},x,y)$, such that $\lim_{n\rightarrow \infty} V(\tilde{t}_n,\tilde{x}_n,\tilde{y}_n) = V_{*}(T_{N-1}^{-},x,y)$. Then, 
        \begin{align*}
           \hat{V}(T_{N-1}^{-},x,y) = \lim_{n\rightarrow\infty} \hat{V}(\tilde{t}_n,\tilde{x}_n,\tilde{y}_n) \leq V_*(T_{N-1}^{-},x,y) \leq V^*(T_{N-1}^{-},x,y) \leq f_{N-1}(x,y).
        \end{align*}
        Combining the later inequality with \eqref{eq::limfn-1}, we obtain $V(T_{N-1}^{-},x,y) = f_{N-1}(x,y)$.\qed
        \\

Finally, we show the induction step. Assume that the results from Theorem $3$ hold on $\mathcal{R}_{i+1}$, for some $i \in \left\{1,\ldots, N-1\right\}$. We will show that it holds on $\mathcal{R}_i$. As the proofs for $2.2$, and $2.3$ are identical compared to the proof in the terminal region $\mathcal{R}_N$, we only write the induction step for Theorem $2.1$. 
\\

\textit{Proof of Theorem $2.1$ on $\mathcal{R}_i$}
\\
\\
Let $(t,x,y) \in \mathcal{R}_{i}$. Our hypothesis of induction claims that Theorem $2$ holds on $\mathcal{R}_{i+1}$. Hence, $V(T^{-}_{i},x,y) =x+ f_i(y) = x+\sup_{\eta} \left\{v(T_i,y-\eta)-\eta^{\gamma} : \hspace{1mm} \eta \in [0,y] \right\}$, for all $(x,y) \in \RR \times [0,\infty)$. By induction, we obtain that for $0 \leq\eta \leq y$: 
\begin{align*}
    v(T_i,y-\eta)+x+\eta^\gamma &\leq \phi^{\gamma,\delta_0}(T_i,y-\eta)+x+\eta^\gamma = \phi^{\gamma,\frac{1}{(N-i+1)^{\gamma-1}},\delta_0}(T_i,y-\eta)+x+\eta^\gamma.
\end{align*}
Hence, for all $(x,y) \in \RR \times (0,\infty)$, using that $\phi^{\gamma,\delta_0}(T_i,y) \geq v(T_i,y)$ by induction, we obtain 
\begin{align}
   V(T_i^{-},x,y) &=f_i(y)+x \nonumber \\
   &= \sup_{\eta}\left\{  v(T_i,y-\eta)+x-\eta^\gamma : \eta \in [0,y] \right\}\nonumber \\
   &\leq \sup_{\eta}\left\{ \phi^{\gamma,\delta_0}(T_i,y-\eta)-\eta^\gamma : \eta \in [0,y]\right\}+x \nonumber\\
   &\leq  \sup_{\eta} \left\{-\frac{1}{(N-i)^{\gamma-1}}(y-\eta)^{\gamma}-\eta^\gamma : \eta \geq 0\right\}+ be^{\frac{T-t}{T}}y^{\frac{1}{M}}+e^{c(T-t)}\left(1-e^{-y}\right)+x \nonumber \\
   &=-\frac{1}{(N-i+1)^{\gamma-1}}y^\gamma + be^{\frac{T-t}{T}}y^{\frac{1}{M}}+e^{c(T-t)}\left(1-e^{-y}\right)+x \nonumber \\
   &=\phi^{\gamma,\delta_0}(T_i^{-},y)+x,\label{eq.final}
\end{align}
where $\delta_0 := (b,c,M)$ is the constant found in Lemma \ref{lemma_supersol}.
Using Lemma \ref{lemma_supersol}  and \eqref{eq.final} we follow the same probabilistic argument we did for $i = N$, obtaining
\begin{equation*}
    -e^{k_a(T_i-t)}y+x\leq \hat{V}(t,x,y) \leq V(t,x,y) \leq \phi^{\gamma,\delta_0}(T_i^{-},y)+x,
\end{equation*}
for all $(t,x,y) \in (T_{i-1},T_i)\times \RR \times (0,\infty)$.

Hence, $\hat{V}_*$ is continuous on $(T_{i-1},T_{i})\times \{y= 0\}$. Invoking (Proposition $4.11$,\cite{bouchard2012weak} ) and following the same arguments done in the proof of this property in terminal region $\mathcal{R}_N$, we obtain that $V(t,x,y)=x+v(t,y)$, where $v$ is a continuous function on $\mathcal{R}_i$. Moreover, $v$ the unique (discontinuous) viscosity solution of the following state constrained HJB equation:
\begin{align*}
    -\varphi_t - \sup_{|z|\leq K}\left\{\frac{1}{2}\left(\varphi_y+\varphi_{yy}\right)z^2+z\right\} -   k_a y v_y   &= 0, \quad (t,y) \in [T_{i-1},T_i)\times (0,\infty),\\
    \varphi(T_{i},y) &=f_{i}(y). \nonumber
\end{align*}
in the class of functions with polynomial growth and  lower semi continuous envelope continuous at $[T_{i-1},T_i]\times\{y=0\}$. 
Hence, we show the induction step. \qed\\

The previous completes the proof of Theorem \ref{hjb:pp:in}. 

\bibliographystyle{alpha}
\bibliography{main}

\end{document}